\newtheorem{thm}{Theorem}[section]
\newtheorem{prop}[thm]{Proposition}
\newtheorem{lem}[thm]{Lemma}
\newtheorem{cor}[thm]{Corollary}
\newtheorem{conj}[thm]{Conjecture}
\newtheorem{Main}[thm]{Main Theorem}
\theoremstyle{definition}
\newtheorem{defn}[thm]{Definition}
\newtheorem{example}[thm]{Example}
\newtheorem{remk}[thm]{Remark}
\newcommand{\dpcross}{
\begin{tikzpicture}[baseline=-2.75,scale=.2]
\draw[-{Stealth[ length=1.25mm, width=1.25mm]},thick ](1,-1)--(-1,1);
\draw[-{Stealth[ length=1.25mm, width=1.25mm]},thick ](-1,-1)--(1,1);
\draw[fill=black] (0,0) circle (.2);
\end{tikzpicture}}
\newcommand{\smooth}{
\begin{tikzpicture}[baseline=-2.75,scale=.2]
\draw[-{Stealth[ length=1.25mm, width=1.25mm]},thick ] (-.8,-1) to[out=45,in=-45] (-1,1);
\draw[-{Stealth[ length=1.25mm, width=1.25mm]},thick ] (.8,-1) to[out=135,in=215] (1,1);
\end{tikzpicture}}
\let\c@conjecture=\c@theorem
\let\c@corollary=\c@theorem
\let\c@proposition=\c@theorem
\let\c@lemma=\c@theorem
\let\c@definition=\c@theorem
\let\c@problem=\c@theorem
\let\c@example=\c@theorem
\let\c@remark=\c@theorem
\let\c@equation\c@theorem
\let\c@question\c@theorem
\def\makeautorefname#1#2{\expandafter\def\csname#1autorefname\endcsname{#2}}
\def\R{\mathbb{R}}
\title{Danceability, A New Definition of Bridge Index}
\begin{document}
\author[Addison]{Sol Addison}
\address{Elon University}
\email{saddison3@elon.edu}

\author[Blackwell]{Sarah Blackwell}
\address{University of Virginia}
\email{blackwell@virginia.edu}
\urladdr{\url{https://seblackwell.com/}}

\author[Pongtanapaisan]{Puttipong Pongtanapaisan}
\address{Pitzer College}
\email{puttip@pitzer.edu}

\author[Scherich]{Nancy Scherich}
\address{Elon University}
\email{nscherich@elon.edu}
\urladdr{\url{https://nancyscherich.com/}}

\author[Snodgrass]{Lila Snodgrass}
\address{Elon University}
\email{lsnodgrass@elon.edu}

\author[Sullivan]{Everett Sullivan}
\address{Clayton State University}
\email{esullivan4@clayton.edu}

\begin{abstract}
    There are many commonly known definitions of the bridge index coming from combinatorial knot theory, Morse theory, geometry, and algebra.  In this paper, we prove that the danceability index is a new equivalent definition of the bridge index which can be seen as an oriented version of the Wirtinger number. We extend the danceability invariant to virtual knots in multiple ways and compare these invariants to two different notions of bridge index for virtual knots. 
\end{abstract}

\maketitle

\section{Introduction} \label{sec:introduction}

The bridge index is a well-known invariant first studied by Schubert in the 1950s \cite{HS}, which is a common tool for measuring the complexity of knots and links. For example, the set of 2-bridge knots is regularly used to test knot-theoretic hypotheses as it is a set of non-trivial, yet somewhat simple, knots. There are many equivalent definitions of the bridge index stemming from different mathematical motivations, but the three most classically well-known definitions come from  combinatorial knot theory, Morse theory, and geometry. 
More recently in 2017, Blair-Kjuchukova-Velazquez-Villanueva introduced an algebraic knot invariant called the Wirtinger index\footnote{The Wirtinger index was originally called Wirtinger \emph{number}, but we have chosen the convention to use \emph{number} to be the diagram-dependent value and \emph{index} to be the invariant value.} and proved it equivalent to the bridge index as well \cite{BKVV}.

The definition of bridge index used in this paper is the combinatorial version stated below.

\begin{defn}[\cite{HS}]\label{def:bridge}
    A \textbf{bridge} of a link diagram is a maximal length unbroken arc that includes at least one overcrossing. 
The \textbf{bridge number} of a link diagram is the number of bridges in the diagram.
The \textbf{bridge index} of a non-trivial link $K$, denoted $br(K)$, is the minimum bridge number taken over all diagrams representing $K$.  
The bridge index of the unknot is defined to be 1.
\end{defn}

\begin{thm}[\cite{schultenspaper,schultensbook,MIL,Rolfsen, BKVV}] \label{thm:bridge}
The bridge index is equal to the following link invariants.
\begin{enumerate}
    \item \emph{(Morse theory.)} The minimum number of local maxima taken over all Morse embeddings of a link. (For a nice exposition and proof, see  \cite{schultenspaper} or \cite[Section 4.8]{schultensbook}.)
    \item \emph{(Geometry.)} The \textbf{crookedness} of a knot\footnote{For the sake of simplicity, for this part of the theorem we only claim equivalence of the invariants in the case of knots.} (closely related to the total curvature), first defined by Milnor \cite{MIL}. Given  $\vec{v}(t)$ a unit circle parameterization of a knot, and  $\vec{w}$ a fixed vector in $\R^3$, one can count the relative maximums of $\vec{w}\cdot \vec{v}(t)$. The crookedness of a knot is the minimum such count taken over all choices of $\vec{w}$ and unit circle parameterizations of the knot; see \cite[Chapter 4.D.19]{Rolfsen}. 
    \item \emph{(Algebra.)} The \textbf{Wirtinger index} of a link is the minimum number of  colors needed to color a link while satisfying a Wirtinger relation at each crossings \cite{BKVV}. 
\end{enumerate}
\end{thm}


In 2021, inspired by the knotted paths of dancers on a stage, mathematician and dancer Karl Schaffer first introduced the notion of the danceability of a knot and questioned how many dancers are required to ``dance'' a given knot \cite{KS}. 
Shortly after in 2024,  Addison-Scherich-Snodgrass formalized Schaffer's ideas into an invariant called the danceability index and proved that this invariant is bounded above by the braid index of a knot \cite{ASS}.  In their paper, the authors conjectured that the danceability index is equal to the bridge index.
We defer the reader to \autoref{sec:DanceabilityIndex} for a complete definition of the danceability index, but informally, this invariant is the minimum number of dancers needed to traverse a link while adhering to strict rules at the crossings.
The main result of this paper is to prove that this notion is equivalent to the bridge index.

\begin{Main}
    The danceability index is equal to the bridge index.
\end{Main}



It can be readily shown that the bridge index is an upper bound for danceability, as we prove in \autoref{prop:upperbound}. 
For the reverse inequality, we give two different proofs. One proof uses a rudimentary diagrammatic technique called the \emph{bridge slide move}.
For the second proof, we turn to the Wirtinger index. As discussed in \autoref{sec:wirt}, the danceability index is closely related to the Wirtinger index, though the spirit and motivation for each invariant is quite different. The danceability index can be seen as an oriented version of the Wirtinger index with restrictions on which strands are considered ``adjacent" or not.
It is straightforward to see that the Wirtinger index is a lower bound for the danceability index, and so we recover our second proof from the result that the Wirtinger index is equal to the bridge index \cite{BKVV}.


We also discuss how the danceability index may be extended to virtual knots, though there is more than one way to make this extension; there are different choices for crossing rules at virtual crossings since there is no over or under-strand.
Interestingly, the bridge index also extends to virtual knots in more than one way; the virtual analogues of the bridge index from \autoref{def:bridge} and the Morse theory bridge index from \autoref{thm:bridge} are not equal for virtual knots \cite{NS}. 
In the final section of this paper, we explore comparisons between three virtual danceability invariants, the two different virtual bridge indexes, and several virtual generalizations of the Wirtinger index. 




\subsection{Organization of the paper} \autoref{sec:DanceabilityIndex} 
is dedicated to defining the danceability index. The danceability index, as originally defined in \cite{ASS}, required dancers to follow the \emph{under-first rule} at every crossing. In this paper, we give two different definitions of the danceability index, one using the \emph{under-first rule} and the other using the \emph{over-first rule}. We prove these two definitions induce the same invariant, and so in the following sections, we have the choice to adhere to either rule. At the end of \autoref{sec:DanceabilityIndex}, we briefly discuss computation of the danceability number of a diagram using discrete objects. \autoref{sec:dance=bridge} contains our first proof that the danceability index is equal to the bridge index. \autoref{sec:wirt} compares the danceability index to the Wirtinger index and gives our second proof that the danceability index is equal to the bridge index. Finally, \autoref{sec:virtual} concerns the extensions of the danceability index to virtual knots.

\subsection{Acknowledgments}
The authors would like to thank the anonymous referee from the journal \emph{Proceedings of Bridges 2024} who refereed paper \cite{ASS}. The result in \autoref{prop:upperbound} is due in part to the discussion with this referee. 
We would like to thank Marc Kegel for helpful conversations that also led to \autoref{prop:upperbound}.
We would also like to thank the anonymous referee from \textit{Topology and its Applications} for very helpful comments that lead to significant improvements the paper.

This project is part of an undergraduate student research experience at Elon University for Sol Addison and Lila Snodgrass, led by professor Nancy Scherich.
This project is funded in part by the Lumen Prize and the Summer Faculty Research Stipend at Elon University, and partially sponsored by the National Science Foundation under grant DMS-2532699. SB was supported by the NSF Postdoctoral Research Fellowship DMS-2303143.

\section{The danceability index}\label{sec:DanceabilityIndex}

As originally defined by Schaffer \cite{KS},   a knot diagram is \emph{danceable} if we can choose an orientation of the diagram and a point on that knot so that a dancer can start at that point, dance in the directional flow of the knot, and traverse the entire diagram with the restriction that the dancer must pass through every crossing as the under-strand first. 
This can be expanded to include multiple dancers, and multiple knotted components, so that if a dancer reaches a crossing on the over-strand having not already passed the crossing as the under-stand first, they must wait at the crossing for another dancer to ``clear the path" for the over-strand dancer. 
Visually, one can imagine the dancers' paths drawing the knot over time when viewed from above. With this definition, time is pointing upward, so later times must always travel on top of earlier times. 
This is explained below as the ``under-first rule" in \autoref{defn:configuration}.
However, if we assume time is pointing downward, then the dancers will trace the over-strand of a crossing first before crossing as the under-stand. This is called the ``over-first rule".

On a given oriented link diagram, the choice of under-first versus over-first can change the total number of dancers needed to traverse the entire diagram.
The danceability index was first defined using the under-first rule to agree with Schaffer's original motivation \cite{ASS}.
However, we prove that the induced danceability invariants for each rule are equal, so the choice of rule is irrelevant.
Since our goal is to prove that the danceability index is equivalent to the bridge index, it is easier to work with the over-first version of the definition since bridges are arcs with overcrossings.



\begin{defn}\label{defn:configuration}
    An \textbf{$n$-dance-under configuration} (resp. \textbf{$n$-dance-over configuration}) of an oriented link diagram is $n$ initial points on the diagram so that conditions (1), (2), and the respective third condition are satisfied.
    

\begin{enumerate}
    \item One dancer starts at each of the $n$ initial points, for a total of $n$ dancers.

    \item  Each dancer travels in the direction of the pre-chosen orientation of the diagram, and stops dancing when they reach the next initial point. This ensures that the total path traveled by all dancers traverses the entire diagram with no overlap.


    \item  The speed that each dancer travels can vary and can be chosen so that the simultaneous tracing of the diagram by all $n$ dancers follows the under-first rule (resp. over-first rule) at every crossing.
    \begin{itemize}
    \item[-] \underline{Under-first rule:} At every crossing, a dancer must pass through the crossing on the under-strand first before a dancer can cross on the over-strand. If a dancer reaches a crossing on an over-strand, they must wait until another dancer passes on the under-strand.

    \item[-] \underline{Over-first rule:} At every crossing, a dancer must pass through the crossing on the over-strand first before a dancer can cross on the under-strand. If a dancer reaches a crossing on an under-strand, they must wait until another dancer passes on the over-strand. 
    \end{itemize}
\end{enumerate}
\end{defn}

\begin{defn}
    For an oriented diagram $D$, the \textbf{under-first danceability number} (resp. \textbf{over-first danceability number}) is the minimum number $n$ so that there exists an $n$-dance-under configuration  (resp. an $n$-dance-over configuration) of $D$. This is denoted $da^u(D)$ (resp. $da^o(D)$). 

\end{defn}

\begin{figure}
\centering
\begin{picture}(150,80)
\put(-15,0){\includegraphics[scale=.4]{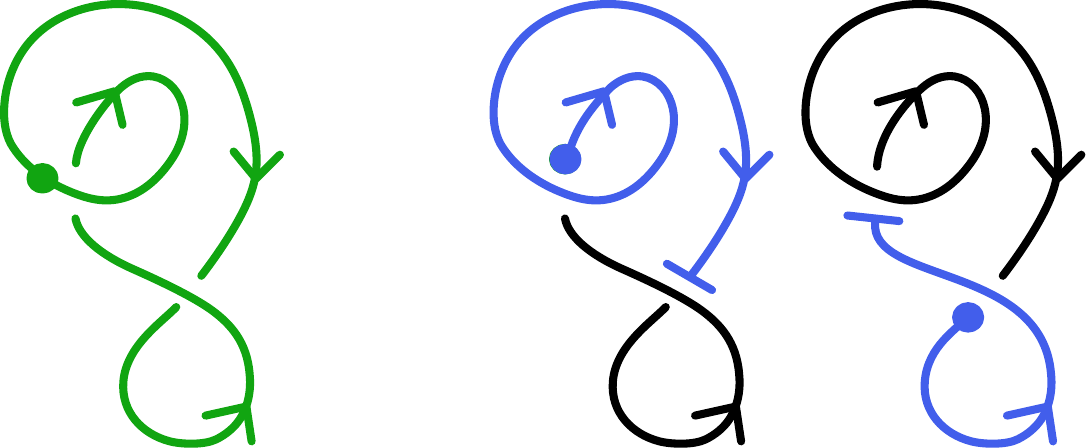}}
\put( -20,6){(A.)}
\put(73,6){(B.)}

\end{picture}
 \caption{The same oriented knot diagram requiring different numbers of dancers depending on whether the under-first or over-first rule is chosen. A dancer's starting point is designated by a dot, and a location where a dancer is waiting for another dancer to clear their path is designated by a $\dashv$. Diagram (A.) shows a 1-dance-under configuration for the knot diagram adhering to the under-first rule. The two figures in diagram (B.) show that this diagram cannot be danced with one dancer adhering to the over-first rule. (Here we only show the two possible staring points at the beginning of bridges in light of \autoref{lem:startatbridge}.)}
    \label{fig:comparison}
\end{figure}

The diagrams in \autoref{fig:comparison} illustrate  the same oriented knot diagram following first the under-first rule and then the over-first rule. We can see that the under-first danceability number is 1, while the over-first danceability number is 2. 
In this example, whether there is a positive or negative twist makes all the difference; because of the orientation, the dancer might be forced to enter the crossing as an under-strand versus the over-strand. 

The fact that simply adding a twist in the ``wrong" direction can  change the danceability of any given diagram demonstrates how the danceability of an oriented diagram using the under-first rule generally does not equal the danceability of that same oriented diagram using the over-first rule.  Each of the under-first and over-first danceability numbers induce a link invariant by taking the minimum over all possible diagrams for a link.

\begin{defn}
For an unoriented link $K$, the \textbf{dance-under index}, denoted $da^u(K)$, is the minimum under-first danceabilty number over all diagrams representing $K$, with any choice of orientation.
Analogously, the \textbf{dance-over index}, denoted $da^o(K)$, is the minimum over-first danceability number over all diagrams representing $K$, with any choice of orientation.
\end{defn}

A priori these two invariants seem different since diagrammatically, choice of orientation and over versus under can change the answer.
However, there is a relationship between these choices as outlined by the following retrograde trick and proposition, which will ultimately show that the induced invariants are, in fact, equal.

\vspace{2mm}\noindent \textbf{Retrograde trick:} Let $D^+$ and $D^-$ denote the two choices of oriented diagrams for an unoriented link diagram $D$. Suppose there is an $n$-dance-over configuration for $D^+$ with $n$ starting positions and varying speeds for the $n$ dancers to traverse the entire diagram in the positive orientation, while upholding the over-first rule. 
We can create an $n$-dance-under configuration for the oppositely oriented diagram $D^-$ by \emph{retrograding} the dance (i.e. watching the dance in reverse with respect to time). Choose the $n$ ending positions of the dance-over configuration to be the new starting points for the dance-under configuration. With time reversed, every crossing will uphold the under-first rule as the diagram is traversed in the opposite direction. This creates an $n$-dance-under configuration for $D^-$.
Analogously, taking the retrograde of a dance-under configuration will create a dance-over configuration with the same number of dancers.

\begin{prop}\label{prop:retrograde_on_diagram}
    For an unoriented link diagram $D$, the over-first danceability number of $D$ with one orientation is equal to the under-first danceability number of  $D$ with the opposite orientation. That is,
     let $D^+$ and $D^-$ denote the two choices of oriented diagrams for $D$. Then 
    \[da^o(D^+)=da^u(D^-).\] 
\end{prop}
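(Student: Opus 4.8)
The plan is to directly invoke the Retrograde Trick stated just above the proposition and show it gives both inequalities. The key observation is that the trick is a fully reversible procedure: given any $n$-dance-over configuration for $D^+$, retrograding it (reversing the flow of time and using the terminal positions of the dancers as the new starting points) produces an $n$-dance-under configuration for $D^-$, because at every crossing the order in which the over-strand and under-strand are traversed is swapped when time is reversed. Conversely, given any $n$-dance-under configuration for $D^-$, the same retrograde operation produces an $n$-dance-over configuration for $D^+$. Since retrograding is an involution on the relevant data (starting points plus speed parametrizations), these two operations are mutually inverse.

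First I would verify that the retrograde of a valid dance-over configuration satisfies the three conditions of Definition~\ref{defn:configuration} for a dance-under configuration. Conditions 1 and 2 are immediate: there are still $n$ dancers, and reversing the orientation together with the direction of travel means each dancer still traverses an arc from one marked point to the next, with the collection of arcs covering $D$ without overlap; the $n$ terminal points of the forward dance become the $n$ initial points of the retrograde dance. For condition 3, at a fixed crossing the over-first rule for $D^+$ says the over-strand passage occurs at an earlier time than the under-strand passage; reading the same motion backwards in time, the under-strand passage now occurs first, which is exactly the under-first rule for $D^-$. Then I would note the argument is symmetric: retrograding a dance-under configuration of $D^-$ yields a dance-over configuration of $D^+$ by the identical reasoning.

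Combining these, every $n$-dance-over configuration of $D^+$ yields an $n$-dance-under configuration of $D^-$, so $da^u(D^-) \le da^o(D^+)$; and every $n$-dance-under configuration of $D^-$ yields an $n$-dance-over configuration of $D^+$, so $da^o(D^+) \le da^u(D^-)$. Hence $da^o(D^+) = da^u(D^-)$.

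I do not anticipate a serious obstacle here, since the Retrograde Trick already packages the construction; the only point requiring care is to state precisely what data a ``configuration'' consists of (the marked points together with an equivalence class of speed parametrizations realizing the rule) so that ``retrograde'' is literally a well-defined involution, and to confirm that waiting states ($\dashv$ positions) reverse correctly—a dancer forced to wait at a crossing for another dancer to clear it under one rule corresponds, after time-reversal, to a dancer departing a crossing that has just been cleared, which is automatically consistent. Once that bookkeeping is made explicit, the two inequalities follow with no further work.
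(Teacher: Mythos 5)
Your proposal is correct and follows essentially the same route as the paper: both invoke the retrograde trick to set up a bijection between $n$-dance-over configurations of $D^+$ and $n$-dance-under configurations of $D^-$, from which equality of the minima follows. Your version simply spells out the verification of the configuration conditions in more detail than the paper does.
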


\begin{proof}

There is a 1-1 correspondence between the $n$-dance configurations $D^+$ using the over-first rule and the $n$-dance configurations of $D^-$ using the under-first rule by the retrograde trick.
So, the minimal number of dancers required to dance $D^+$ with the over-first rule must also be the minimal number of dancers required to dance $D^-$ with the under-first rule.
\end{proof}

\begin{cor}
For any link K, the danceability invariant using the over-first rule is equal to the danceability invariant using the under-first rule, that is,
\[da^o(K)=da^u(K).\]

\end{cor}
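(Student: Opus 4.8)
The plan is to deduce the corollary directly from Proposition \ref{prop:retrograde_on_diagram} by taking a minimum over all diagrams. The key observation is that the set of diagrams representing a knot $K$ is closed under reversing orientation: if $D^+$ is a diagram for $K$ with one orientation, then $D^-$ (the same underlying diagram with the opposite orientation) is also a diagram for $K$, since $K$ as an un-oriented knot is insensitive to which orientation we pick. Hence the collection $\{D^+ : D \text{ a diagram of } K\}$ and the collection $\{D^- : D \text{ a diagram of } K\}$ are in fact the same set of oriented diagrams, just indexed differently.

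First I would write $da^o(K) = \min_D da^o(D^+)$, where the minimum ranges over all un-oriented diagrams $D$ of $K$ (and the choice of labeling $+$ versus $-$ for the two orientations is arbitrary but fixed). By Proposition \ref{prop:retrograde_on_diagram}, each term satisfies $da^o(D^+) = da^u(D^-)$. Therefore
\[
da^o(K) = \min_D da^o(D^+) = \min_D da^u(D^-).
\]
Then I would argue that $\min_D da^u(D^-)$ is exactly $da^u(K)$: as $D$ ranges over all un-oriented diagrams of $K$, the oriented diagram $D^-$ ranges over all oriented diagrams of $K$ (every oriented diagram of $K$ arises this way, since it is some un-oriented diagram equipped with one of its two orientations, and we may call that orientation the ``$-$'' one). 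Since the danceability index $da^u(K)$ is by definition the minimum of $da^u$ over all oriented diagrams of $K$, we conclude $\min_D da^u(D^-) = da^u(K)$, and hence $da^o(K) = da^u(K)$.

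There is essentially no obstacle here; the only point requiring a moment's care is the bookkeeping about orientations — namely that taking the minimum over un-oriented diagrams $D$ of the quantity $da^u(D^-)$ genuinely sweeps out all oriented representatives of $K$, so that no oriented diagram is missed and the two minima coincide. This is immediate from the definition of the dance-under and dance-over indices as minima over all diagrams with any choice of orientation, combined with the fact that reversing orientation is an involution on the set of oriented diagrams of $K$. So the proof is a short two-line chain of equalities with a one-sentence justification of the reindexing.
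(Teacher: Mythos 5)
Your argument is correct and is essentially the paper's own: both rest on Proposition \ref{prop:retrograde_on_diagram} (the retrograde trick) together with the fact that an un-oriented knot admits every diagram with either orientation, so orientation reversal is an involution on the oriented diagrams of $K$; the paper merely phrases this as two inequalities obtained from minimizing diagrams rather than as your single reindexed minimum. Only be careful that your parenthetical ``arbitrary but fixed'' labeling is in slight tension with your later relabeling of orientations --- the clean statement is that the minimum runs over \emph{all} oriented diagrams and is preserved under the involution --- but this is a bookkeeping point, not a gap.
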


\begin{proof}
    Suppose $da^o(K)=n$ and let $D^+$ be an oriented diagram  for $K$ realizing this minimum following the over-first rule.
    That is, $da^o(D^+)=n$. 
    Then by the retrograde trick, $da^u(D^-)=n$, where $D^-$ is an oriented diagram representing $K$.
    By minimality, we see $da^u(K)\leq da^u(D^-)= da^o(K)$.
    Reversing the argument, if $E^-$ is an oriented diagram for $K$ realizing the minimum following the under-first rule, $da^u(K)=da^u(E^-)$, then again by the retrograde trick we see that $da^o(K)\leq da^o(E^+)=da^u(K)$.
    Thus we can conclude that $da^o(K)=da^u(K).$
\end{proof}

Because the choice of under versus over-first does not affect the value of the induced invariant, we can define the danceability index as follows.

\begin{defn}\label{defn:danceability-1}
    For a link $K$, the \textbf{danceability index}, denoted $da(K)$, is $$da(K)\vcentcolon =da^o(K)=da^u(K).$$
    Moreover, the danceability index of $K$ is the minimum number of dancers needed to dance $K$ over all choices of oriented diagrams representing $K$, with the choice of either the over-first or under-first rule.
\end{defn}

\subsection{Computing danceability number from Gauss codes}

For a link diagram $D$, a dance configuration for $D$ can be described and stored in discrete objects through the use of Gauss codes. 
Since a dance configuration is only restricted by the timing of the dancers passing through undercrossings and overcrossings, the path of the dancer between crossings is not critical; we can discretize a dance configuration by supposing that as soon as a dancer passes through a crossing, they are immediately at the next crossing. So a Gauss code can capture the least amount of information needed to answer the problem of finding the minimum number of dancers for a diagram.

        \begin{defn}[\cite{Kauf}]\label{def:gausscodes}
			Let $D$ be an oriented link diagram with $n$ crossings. Let each crossing have a distinct label from $\{1,\ldots,n\}$, and let $n^{+}$ denote the overcrossing of crossing $n$ and $n^{-}$ the undercrossing of crossing $n$. 
            A \textbf{Gauss code} is a cyclic sequence of $n^{+}$'s and $n^{-}$'s for each component of $D$ obtained by traversing $D$ in the given orientation.\footnote{Gauss codes are also called ``signed Gauss sentences,'' or ``signed Gauss paragraphs'' for links; see \cite{KUR}.}
		\end{defn}
        \begin{figure}
            \centering
            \includegraphics[scale=1]{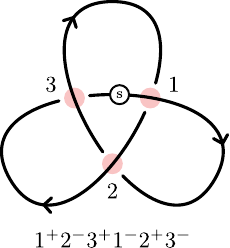}
            \caption{Example of Gauss code for the trefoil knot diagram.}
            \label{fig:Gauss_trefoil}
        \end{figure}
        
			\autoref{fig:Gauss_trefoil} shows an example Gauss code for the oriented trefoil diagram with labeled crossings. While the Gauss code is cyclic, a circled \emph{s} is indicated on the knot to show where the sequence started.

The location of dancers on a link diagram $D$ can be stored in a (cyclic) table whose columns are labeled by a chosen Gauss code for $D$. Since the only movement restrictions are at crossings, dancers can freely dance along arcs of the link that are not near a crossing. So the table will only record the location of a dancer on an arc right before the next crossing in their progression. For example, in \autoref{fig:example_progression} dancer $A$ is in the column labeled by $1^+$, which means dancer $A$ is on the over-strand of the crossing labeled 1 and has not yet passed over this crossing. 

As a dancer moves through the link diagram, the location of the dancer advances through the table to the next column. For example, in \autoref{fig:example_progression} dancer $B$ moved through crossing 3 on the over strand and is heading to crossing 1 on the under-strand. This change is documented in the table by advancing $B$ forward one entry--and this new configuration is stored as a new row of the table.
We will call this a ``dance progression,'' that is, the advancement of one dancer across the next crossing (either over or under) in front of them.

\begin{figure}
    \centering
    \includegraphics[scale=1]{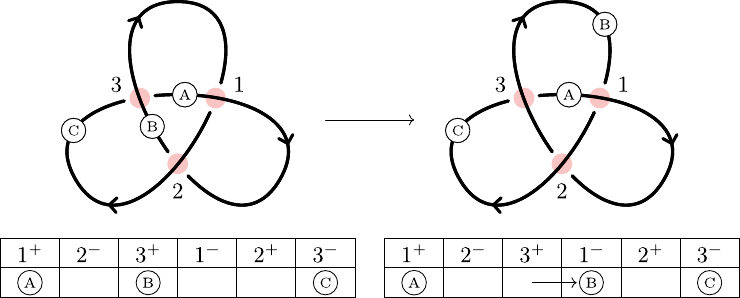}
    \caption{An example dance move where dancer $B$ advances forward through crossing 3.}
    \label{fig:example_progression}
\end{figure}

        \begin{figure}
            \centering
            \includegraphics[scale=1]{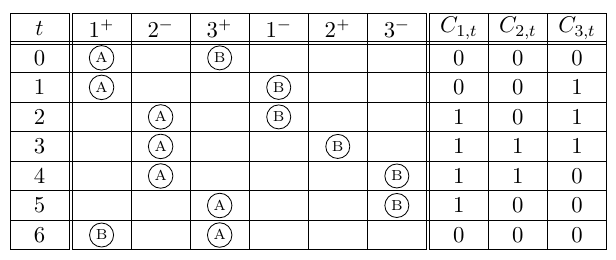}
            \caption{An example free-form 2-dance for the trefoil knot diagram which defines a 2-dance configuration for the diagram. The crossing-statuses are shown at the right of the table for each time increment. }
            \label{fig:free_forem_trefoil}
        \end{figure}

        \begin{defn}
			A \textbf{free-form $n$-dance} of an oriented link diagram $D$ is a table whose columns are labeled by a Gauss code for $D$, with rows labeled by time, such that each row shows the locations of the $n$ dancers along the diagram, and each row is one application of a dance progression applied to the previous row.
            \end{defn}

	\autoref{fig:free_forem_trefoil} shows a free-form $2$-dance for the trefoil diagram. To determine if a free-form $n$-dance gives a valid $n$-dance configuration for $D$, we need to check if the over-first rule is adhered to at each crossing. To do this, we can calculate the ``crossing-status'' of each crossing, as in the next definition.	
			\begin{defn}The \textbf{crossing-status of a crossing $n$ at time $t$}, denoted $C_{n,t}$, is the number of times a dancer has advanced through its overcrossing subtracted by the number of times a dancer has advanced through its undercrossing in the first $t$ dance progressions of the dance.\end{defn}

If a dancer crosses through a crossing as the under-strand first before the over-strand has been danced, the crossing status for that crossing will temporarily be negative.
            In conclusion, a free-form $n$-dance gives an $n$-dance configuration if the following two conditions are true:
            \begin{enumerate}
            \item \underline{Satisfies over-first rule:} The crossing-status of every crossing is non-negative for all time.
            \item \underline{The entire diagram is danced:} The last row and the first row have dancers in the same locations, but each individual dancer has shifted forward to finish in the starting position of the dancer ahead of them in the row (cyclically).
            \end{enumerate}

    Since these two conditions can be checked in polynomial time, verifying if a free-form $n$-dance is an $n$-dance configuration can be done in polynomial time of the number of crossings $O(n^{k})$ by the following algorithm:

        \begin{enumerate}
            \item
                Give an ordering to the dancers.
            \item 
                Select the first dancer to be the active dancer.
            \item 
                Check if the active dancer can move. If they can \emph{and they are not at the starting location of the next dancer}, do so, update crossing status, and go to step 4.
                Otherwise move to step 5.
            \item
                Check if all the dancers have advanced to the starting position of the next dancer. If so, the diagram has been danced, return true. (Since this is only checked after at least one dancer has moved, we are not testing the starting location.)
            \item 
                If the active dancer is the last dancer, then no dancer has been able to move, meaning the link is in deadlock, return false.
                Otherwise set the active dancer to be the next dancer and go to step 3.
        \end{enumerate}

        The algorithm loops through the list of dancers moving each one around the link.
        Since each dancer will make no more than $2n$ moves to complete the track, and supposing there are $d$ dancers, the  total number of moves is no more than $2 n\cdot d$.
        In the worse case only one move is made per loop. meaning that the number of times that a dancer is checked is a maximum of $2n\cdot d\cdot d = 2n\cdot d^{2}$.
        Since there are $2n$ spaces on the Gauss code and only one dancer is allowed per spot, we know that $d<n$ and so the algorithm returns an answer in $O(n^{3})$.

        In order to find the minimum number of dancers needed to complete a dance of a diagram, all possible starting positions must be considered. Since we only need to consider dancers starting at the beginning of a strand (by \autoref{lem:startatbridge}), there are $2^{n}$ possible starting configurations with a specified direction.
        Thus, considering both directions, there are $2\cdot2^{n}$ starting positions.

        For each starting position, we may run the algorithm to check if it is a valid $n$-dance configuration and return the smallest number needed.
        Thus the computation complexity of finding the fewest needed for a given link diagram is $O(2\cdot2^{n}\cdot n^{3}) = O(2^{n})$.\footnote{We can alternatively view this in terms of scheduling theory, viewing the Gauss code as a cyclic schedule of $n$ events.
        If each event has two sub-events, one of which must be done before the other, then determining the minimum number of dancers for a configuration is equivalent to determining the minimum number of parallel processes needed to run the schedule endlessly without deadlock.}





\section{The danceability index is equal to the bridge index}\label{sec:dance=bridge}

\begin{prop}\label{prop:upperbound}
The bridge index is an upper bound for danceability index: $da(K)\leq br(K)$.
\end{prop}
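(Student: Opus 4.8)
## Proof Proposal

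The plan is to take a minimal-bridge diagram $D$ of $K$ and exhibit a dance configuration on it with exactly $br(K)$ dancers. Since $da(K) = da^o(K)$ by Definition \ref{defn:danceability-1}, it suffices to produce an $n$-dance-over configuration for $D$ with $n = br(K)$ dancers. The natural idea is to place one dancer's starting point at the beginning of each bridge. Recall a bridge is a maximal unbroken arc containing at least one overcrossing, so a diagram with $b$ bridges decomposes (following the orientation) into $b$ bridge-arcs alternating with $b$ under-arcs (maximal arcs consisting only of undercrossings), where each under-arc connects the end of one bridge to the start of the next. Starting a dancer at the beginning of each bridge means each dancer traverses one bridge followed by one under-arc before reaching the next starting point; collectively the $b$ dancers traverse all of $D$ with no overlap, so conditions (1) and (2) of Definition \ref{defn:configuration} are satisfied.

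The remaining work is to choose the speeds so the over-first rule holds: at each crossing, the strand passing over must be traversed before the strand passing under. First I would observe that every overcrossing of the diagram lies on some bridge, hence is the ``forward'' part of some dancer's route, while every undercrossing lies either on a bridge or on an under-arc. The key structural fact to extract is that one can process the crossings in a consistent temporal order — for instance, have all $b$ dancers first traverse their entire bridge-arcs (in an appropriately staggered order), and only afterwards traverse their under-arcs. If every dancer completes its bridge before any dancer begins its under-arc, then by the time an under-arc is danced, every overcrossing in the diagram has already been cleared, so the under-first... rather, the over-first rule is automatically satisfied along under-arcs. The only potential conflicts are then between two bridges: a crossing whose over-strand lies on bridge $i$ and whose under-strand lies on bridge $j$. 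For these, I would order the bridges $1, 2, \dots, b$ and have dancer $i$ traverse its bridge during time-interval $i$, so that whenever dancer $j$ meets an undercrossing on its bridge, either the over-strand is on its own bridge earlier in the same pass (handled by local speed choice within the bridge, since a bridge can always be danced ``over-first'' with one dancer when its overcrossings are cleared... ) or the over-strand is on an earlier bridge, already traversed.

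The main obstacle is the case where a single bridge contains an undercrossing whose matching overcrossing lies further along that same bridge — i.e., the over-strand and under-strand of one crossing both sit on one dancer's route, with the under-strand encountered first. A single dancer traversing that bridge monotonically cannot dance the under-strand after the over-strand if it meets the under-strand first. I expect this is resolved by a more careful choice of \emph{which} point on each bridge to start the dancer, or by interleaving the dancers' bridge-traversals more cleverly (not strictly one bridge at a time), using the fact that we get to pick both the diagram-orientation and the starting points. An alternative, cleaner route is to invoke Theorem \ref{thm:bridge}(1): realize $br(K)$ as the number of local maxima of a Morse embedding, use the $br(K)$ local maxima and $br(K)$ local minima to cut the knot into $2\,br(K)$ monotone arcs, and build the dance directly from the height function — each descending arc from a maximum can serve as (part of) a dancer's route, with the height function itself providing the timing that guarantees the over-first rule, since ``higher'' strands are danced later. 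I would develop the Morse-theoretic version if the purely diagrammatic bookkeeping becomes unwieldy, as it packages the timing argument automatically.
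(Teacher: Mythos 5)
Your construction (one dancer at the start of each bridge, all bridges danced first, then the remaining arcs) is exactly the paper's, but your write-up stops short of a proof because the two ``conflicts'' you worry about cannot occur, and you never resolve them — you only gesture at a cleverer interleaving or an undeveloped Morse-theoretic fallback. The point you are missing is in Definition \ref{def:bridge}: a bridge is a maximal \emph{unbroken} arc, and an arc of a diagram is broken precisely at its undercrossings. Hence a bridge contains overcrossings only; no undercrossing lies on any bridge. This kills both of your problem cases at once: there is no crossing whose under-strand lies on bridge $j$ (so there are no inter-bridge conflicts and no ordering or staggering of the bridges is needed — under the over-first rule a dancer on an over-strand never waits, so all dancers can traverse their bridges simultaneously), and there is no undercrossing sitting on a bridge ahead of its own overcrossing (so your ``main obstacle'' is vacuous). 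Dually, the over-strand of \emph{every} crossing lies on some maximal unbroken arc containing that overcrossing, i.e.\ on some bridge, so after the first phase every crossing has been crossed over as the over-strand. The remaining portions of each dancer's route contain only undercrossings, all of which are now cleared, so the dancers finish freely and conditions (1)--(3) of Definition \ref{defn:configuration} hold with $n=br(K)$ dancers, giving $da(K)\leq da(D)\leq br(D)=br(K)$.

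One smaller inaccuracy: the diagram need not decompose into $b$ bridges alternating with exactly $b$ under-arcs — between two consecutive bridges there may be several maximal non-bridge arcs separated by undercrossings whose over-strands lie on other bridges. This does not affect the argument (each dancer's route is one bridge followed by a stretch containing only undercrossings), but it is another sign that the bookkeeping you feared is unnecessary once the definition of bridge is read correctly; the Morse-theory detour via Theorem \ref{thm:bridge}(1) is not needed.
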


\begin{proof}
Let $D$ be a diagram of a link $K$ realizing the bridge index of $K$, that is, $br(D)=br(K)=n$.
On every bridge, place a dancer's initial starting point. 
A bridge is an unbroken arc made up of only overcrossings, so for a dancer following the over-first rule, each dancer can advance forward across all bridges at the same time. 
After this, all crossings have been crossed over as the over-strand first. 
Hence all dancers can continue to dance all arcs which are not bridges until the entire diagram is traversed. 
This shows that $D$ admits an $n$-dance-over configuration, and so $da(K)\leq da(D)\leq br(D)=br(K)$.
\end{proof}

 




\begin{lem}\label{lem:startatbridge}

A diagram with an $n$-dance-over configuration admits an $\ell$-dance-over configuration, with $\ell\leq n$, where every initial point is at the start of a bridge. 
\end{lem}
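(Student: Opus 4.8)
The plan is to start from an arbitrary $n$-dance-over configuration and show that each initial point can be "slid" to the start of a bridge without increasing the number of dancers, and occasionally decreasing it. Given an $n$-dance-over configuration of $D$, consider one initial point $p$ that is not at the start of a bridge. Since $p$ is not at the start of a bridge, either $p$ lies in the interior of an overcrossing arc (a bridge) or $p$ lies on an arc that begins with an undercrossing. In the first case, the dancer starting at $p$ traverses some overcrossings immediately; in the second case, the dancer starting at $p$ is forced to wait at the first crossing until another dancer clears it as the over-strand. In either case I want to move the starting point of this dancer \emph{backward} along the orientation, past undercrossings, until it reaches the start of a bridge, i.e.\ the first point (going backward) immediately following an undercrossing-terminated arc — equivalently, the first overcrossing encountered going forward that is preceded by an undercrossing.

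The key observation is a timing argument. Slide $p$ backward against the orientation to a new point $p'$ that sits at the start of a bridge, so the arc from $p'$ to $p$ consists only of undercrossings (this is exactly what "start of a bridge" means when scanning backward: we stop as soon as we would cross an overcrossing that starts a bridge, and everything between is undercrossings or already-part-of-a-bridge overcrossings). Actually the cleanest formulation: let $p'$ be the closest point preceding $p$ (against orientation) that is at the start of a bridge; then the sub-arc from $p'$ to $p$ contains only undercrossings (any overcrossing on it would either start a bridge — contradicting closeness — or be interior to a bridge, but then $p$ would be interior to that same bridge and its start would be even closer). Now in the new configuration, the dancer formerly starting at $p$ starts at $p'$ instead, and the dancer who used to end at $p'$ now ends at $p$ (absorbing the sub-arc). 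I claim the over-first rule is still satisfiable: the newly-added sub-arc from $p'$ to $p$ consists only of undercrossings, and each such undercrossing was already danced (as an undercrossing) at some time in the original configuration by \emph{some} dancer, necessarily \emph{after} its partner overcrossing was danced. The dancer now ending at $p$ can simply wait at $p'$ and then traverse these undercrossings at the appropriate (late enough) times, exactly mimicking the original schedule on that sub-arc — this does not disturb any crossing status because we are only relocating \emph{which} dancer performs an undercrossing-advance, not \emph{when}.

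Repeat this relocation for every initial point not at the start of a bridge. A subtlety to address: after sliding $p$ back to $p'$, it is possible that $p'$ coincides with (or is "absorbed into") another initial point's territory, or that two initial points want to slide to the same bridge-start; in that case one dancer becomes redundant and we get $\ell < n$, which is fine since the lemma only asks for $\ell \le n$. I should also handle the degenerate case where $D$ has no overcrossings at all — but then $D$ is a diagram of the unknot with no crossings, and a single dancer (indeed, the statement is about nontrivial configurations) trivially works; or more carefully, if there are no bridges then there are no crossings, so any single starting point dances the whole diagram and $\ell = 1 \le n$. The main obstacle is making the timing argument fully rigorous: I must argue that relocating the performer of each undercrossing-advance on the slid sub-arc, while keeping the global sequence of "advance events" in the same temporal order, still yields a \emph{valid} free-form dance in the sense of Definition~\ref{defn:configuration} — in particular that the dancer ending at $p$ physically has time to reach each of these undercrossings by the scheduled moment. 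This follows because speeds are unconstrained (condition 3 allows arbitrary variable speeds, including waiting), so the dancer can wait at $p'$ until the first such undercrossing is "due," then race along, pausing as needed; the only constraints are the crossing-status inequalities, which are preserved by construction.
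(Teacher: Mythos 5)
There is a genuine gap, and it sits exactly in the case the lemma is really about: an initial point $p$ that does not lie on a bridge. Your plan slides every such point \emph{backward} to the nearest preceding bridge-start $p'$, and rests on the claim that the sub-arc from $p'$ to $p$ contains only undercrossings. That claim is false: to reach the nearest bridge-start going backward from a point on a non-bridge arc you must first pass backward through an entire bridge, so the sub-arc from $p'$ to $p$ contains all of that bridge's overcrossings as well as at least one underpass. (The transferred overcrossings are harmless, since the over-first rule never makes an over-strand dancer wait; the transferred undercrossings are the problem.) More seriously, the backward slide prepends this sub-arc to the \emph{front} dancer $A$'s path, and your scheduling fix --- have $A$ wait at $p'$ and perform the transferred undercrossing-advances at the times the dancer behind originally performed them --- cannot work in general: those times come at the end of that other dancer's dance, so all of $A$'s own original advances (including overcrossing-advances that other dancers' waits depend on) would have to be postponed past them; you are in fact changing ``when,'' not just ``who.'' Worse, the failure can be unfixable by any rescheduling: an undercrossing on the prepended sub-arc may have its over-strand partner lying further along $A$'s own path. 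The original configuration is perfectly valid ($A$ dances that over-strand early, and the dancer behind passes under it much later, at the end of their path), but after the backward slide only $A$ dances that over-strand, and $A$ would have to pass under the crossing before reaching it --- a deadlock, so the relocated configuration is not an $n$-dance-over configuration at all.

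This is precisely why the paper treats the two situations differently: an initial point in the interior of a bridge is dragged \emph{backward} to the start of that bridge (safe, since only overcrossings are transferred), while an initial point not on a bridge is slid \emph{forward} to the start of the next bridge, with the skipped sub-arc appended to the \emph{end} of the preceding dancer $B$'s path, as in Figure \ref{fig:dragstartingpoint}. Forward is the right direction because the only crossings transferred are undercrossings and they are danced \emph{later} than before, which can never violate the over-first rule; moreover the over-strand partners of the skipped underpass must have been danced by dancers other than $A$ before $A$'s first advance in the original dance (there are no crossings between $a$ and that underpass), so they are already cleared by the time $B$ finishes the old path and takes over the skipped arc. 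Your merging observation (two initial points on one bridge gives $\ell<n$) and the crossing-free degenerate case are fine, but the backward-slide step for points off the bridges needs to be replaced by the forward slide, or by a new argument that rules out the deadlock obstruction above.
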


\begin{proof}

Suppose a diagram $D$ has an $n$-dance-over configuration.
    First note that every dancer's initial point is the ending point for the dancer dancing behind them in the orientation.

    Adhering to the over-first rule, a dancer can freely dance forward along a bridge. If two dancers have their initial points on the same bridge, one dancer can be removed and the remaining dancer's initial point can be dragged backwards to the start of the bridge.

    If a dancer has an initial point that is not on a bridge, we will show this dancer's starting point can be slid forward to the start of the next bridge.
    Suppose dancer $A$ has a starting point, denoted $a$ on an arc of $D$ that is between two underpasses, as shown in \autoref{fig:dragstartingpoint}. 
    Let $s$ denote the start of the first bridge in the path ahead of $a$.
    Let dancer $B$ denote the dancer behind dancer $A$.
    We need to show that dancer $A$ can start at $s$ and dancer $B$ can dance their entire path and then dance the path from $a$ to $s$ that $A$ is no longer dancing.
    
    Keeping the rest of the configuration the same, suppose dancer $A$ starts at $s$ instead of $a$. We progress through the diagram in \autoref{fig:dragstartingpoint} over time and discuss which strands have to be danced by whom and in what order.

    \setdefaultleftmargin{2cm}{2cm}{}{}{}{}
    \begin{enumerate}
    
    \item[First:] Since we started with an $n$-dance-over configuration, the overcrossings of underpass 2 must be danced first (and by dancers other than $A$) before $A$ could have advanced from $a$ to $s$.
    \item[Second:] Then $A$ is free to dance over the bridge starting at $s$. 
    \item[Third:] Both $A$ and $B$ are free to dance any part of the diagram until dancer $B$ can freely arrive at the undercrossings of underpass 1. 
    \item[Fourth:] Since we started with an $n$-dance-over configuration, all overcrossings of underpass 1 can be freely crossed by dancers other than $B$, but possibly including $A$. 
    \item[Fifth:] Dancer $B$ passes under underpass 1 and arrives at $a$, completing the original path for dancer $B$. 
    \item[Sixth:] Since the overcrossing of underpass 2 was crossed earlier in the dance, dancer $B$ is free to step forward through to $s$, completing the new configuration. 
    \end{enumerate}
\end{proof}

\begin{cor}
If a diagram $D$ has danceability number $n$, then there is at most one dancer's starting point on every bridge of $D$.
\end{cor}
\begin{figure}
			    \centering
			    
				\begin{tikzpicture}

				\draw[white,double=black,ultra thick,double distance=1.5pt, line cap=round] (9,1) -- (9,-1);

					\draw[-{Stealth[length=5mm]}, white,double=black,ultra thick,double distance=1.5pt, line cap=round] (0,0) -- (10,0);
     
                        \draw[-{Stealth[length=5mm]}, ultra thick, line cap=round] (9.5,0) -- (10,0);
					
					\draw[white,double=black,ultra thick,double distance=1.5pt, line cap=round] (2,1) -- (2,-1);
                         \node[] at (2.5,.75){\Large  $\cdot\cdot\cdot$};
                        \node[] at (2.5,-.75){\Large  $\cdot\cdot\cdot$};
					\draw[white,double=black,ultra thick,double distance=1.5pt, line cap=round] (3,1) -- (3,-1);
					
					\draw[white,double=black,ultra thick,double distance=1.5pt, line cap=round] (5,1) -- (5,-1);
                        \node[] at (5.5,.75){\Large  $\cdot\cdot\cdot$};
                        \node[] at (5.5,-.75){\Large  $\cdot\cdot\cdot$};
					\draw[white,double=black,ultra thick,double distance=1.5pt, line cap=round] (6,1) -- (6,-1);

				    \node[fill=white,shape=rectangle,draw,inner sep=2.3pt] at (1,0) {\small $B$};
                    \node[fill=white,shape=circle,draw,inner sep=2.3pt] at (4,0) {\small $a$};
                        \node[fill=white,shape=circle,draw,inner sep=2.3pt] at (6.5,0) {\small $s$};

                    \draw [decorate,decoration={brace,amplitude=5pt},xshift=0pt,yshift=0pt] (1.8,1) -- (3.2,1) node [black,midway,yshift=.5cm] {underpass 1};
					\draw [decorate,decoration={brace,amplitude=5pt},xshift=0pt,yshift=0pt] (4.8,1) -- (6.2,1) node [black,midway,yshift=.5cm] {underpass 2};
                    \draw[->](6.5,.5)--(6.5,.3);
                    \draw [] (7.5,.7)  node {\small start of bridge};

				\end{tikzpicture}
    \caption{}\label{fig:dragstartingpoint}
    \end{figure}

We now have what we need to prove the main result of the paper: the danceability index is equal to the bridge index. 
Resulting from \autoref{prop:upperbound}, all that remains to be shown is that the danceability index is an upper bound for the bridge index. 
The key ingredient for the proof is a bridge slide move, shown in \autoref{fig:path_and_slide}. This move is similar to the bridge reduction move used by Hirasawa-Kamada-Kamada to compare the bridge index definitions for virtual links \cite[Lemma 4.1]{HKK}. 
Our proof, although it does not rely on virtual crossings, uses the same idea of reducing the bridge number by applying a bridge slide move. 

\begin{thm}\label{thm:bridgeEquivDance}
     The bridge index is equal to the danceability index; $da(K) = br(K)$ for any classical link $K$.
\end{thm}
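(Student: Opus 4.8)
The plan is to prove the remaining inequality $br(K) \le da(K)$ by taking a diagram $D$ realizing the danceability index and repeatedly simplifying it until its bridge number drops to $da(K)$. By Proposition~\ref{prop:upperbound} together with this inequality, equality follows. So fix an oriented diagram $D$ with an $n$-dance-over configuration where $n = da(K)$, and by Lemma~\ref{lem:startatbridge} assume every dancer's initial point sits at the start of a bridge, so $D$ has exactly $n$ bridges (or fewer; if fewer we are already done since $br(D) \ge br(K)$ would then give $br(K) \le n$ directly). The goal is to modify $D$ by ambient isotopy and Reidemeister moves (preserving the knot type $K$) so that the number of bridges strictly decreases while the diagram still admits an $n$-dance-over configuration --- or, more cleanly, to show that a diagram with an $n$-dance-over configuration can be converted into a diagram with at most $n$ bridges outright, at which point $br(K) \le n$.

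The key mechanism is the bridge slide move depicted in Figure~\ref{fig:path_and_slide}. The idea: when a dancer $A$ following the over-first rule traverses a bridge and then proceeds along its path, the over-first timing data records exactly which undercrossings $A$ must pass only after their overcrossings have been cleared. I would argue that this dancing data lets us isotope the bridge of $A$ along the path that $A$ dances, sliding it forward over the strands it is permitted to cross (the ones already cleared in the dance ordering) until the bridge of $A$ merges with --- or is absorbed into --- the region near another dancer's bridge. Concretely, each dancer $A$ traces an arc from its start (start of a bridge) to the start of the next dancer's bridge; along this arc $A$ passes under some crossings, but the over-first rule guarantees those overcrossings were danced earlier. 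This earlier-danced condition is precisely what permits a finger-move / bridge slide that pushes $A$'s bridge across those strands without increasing crossings in a way that reduces the total bridge count. Iterating over all $n$ dancers, in the order in which they complete their paths, should collapse the diagram to one with at most $n$ bridges.

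The main obstacle --- and where the real content of the proof lies --- is making the bridge slide move precise and verifying that it genuinely reduces (or at least does not increase) the bridge number while keeping track of the global consistency of the slides. A single slide is an ambient isotopy of the strand, but one must ensure that sliding $A$'s bridge forward does not create new overcrossings elsewhere that split some other bridge into two, and that the slides for different dancers can be performed in a compatible sequence. The natural way to control this is to perform the slides in the temporal order dictated by the dance: process crossings in the order in which their undercrossings are danced, and at each step use the fact that the corresponding overcrossing has already been ``resolved'' (its over-strand now lies on a bridge that has been slid past). One should also handle the bookkeeping at the crossings where two of the relevant strands belong to the same bridge (as in the two-dancers-on-one-bridge reduction already seen in Lemma~\ref{lem:startatbridge}), and the base case where $n=1$, i.e. the diagram is already danceable with one dancer, which must force a one-bridge (hence unknot, by Definition~\ref{def:bridge}) presentation. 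I would structure the argument as an induction on the number of crossings or on the number of ``bad'' undercrossings (those danced before their own overcrossing in the original configuration would be a non-issue since over-first forbids this --- rather, count undercrossings lying on a bridge other than the one whose dancer will absorb it), showing each bridge slide strictly decreases this count, terminating in a diagram with $n$ bridges.
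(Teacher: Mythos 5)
Your overall strategy coincides with the paper's: reduce to $br(K)\le da(K)$ via Proposition \ref{prop:upperbound}, start from a minimal dance-over configuration with all initial points at starts of bridges (Lemma \ref{lem:startatbridge}), and use bridge slides to merge bridges until every bridge carries an initial point. But the proposal stops exactly where the real content lies, and two essential steps are missing. First, you never establish why the bridge slide is a legal sequence of Reidemeister II and III moves. The paper's key observation is that when a dancer $A$ traverses two bridges, the over-first rule forces the crossings of the underpass at which $A$ waits (underpass 2 in Figure \ref{fig:path_and_slide}) to be \emph{disjoint} from the crossings of the following overpass: if a crossing occurred in both, $A$ would dance its under-strand before its over-strand. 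Your substitute condition, that ``those overcrossings were danced earlier,'' is a temporal statement about the dance, not the diagrammatic disjointness that is actually needed; without disjointness the slide would require pushing a strand past itself, which R2/R3 moves cannot do. Second, you do not verify that the slid diagram still admits a $k$-dance configuration, which is needed both to iterate the reduction and to conclude $br(D')=da(D')=da(K)$ at the end. The paper checks this explicitly: the new crossings created by the slide are over-crossings for the dancers on the over-strands of underpass 2, and the dancers on the under-strands of overpass 2 were already waiting for $A$ (who waits for the underpass-2 dancers), so the same dancers dance the new diagram in the same order. You yourself flag these as ``the main obstacle'' and offer only a bookkeeping scheme and an unspecified induction measure, so the core of the argument is not carried out.

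Two smaller points. Your claim that after Lemma \ref{lem:startatbridge} the diagram ``has exactly $n$ bridges (or fewer)'' is false: the lemma only places initial points at starts of bridges, and there may be bridges with no initial point (equivalently, a dancer whose path crosses two or more bridges); that surplus is precisely what the bridge slides must eliminate. Relatedly, the slide merges the \emph{two bridges on dancer $A$'s own path} into one, rather than absorbing $A$'s bridge into another dancer's bridge as you describe.
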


\begin{proof}
		

Since the bridge index is an upper bound for the danceability index, as proved in \autoref{prop:upperbound}, it suffices to show that the reverse inequality is true. That is, we will show that for any link $K$, $br(K)\leq da(K)$.
For this proof, we use the over-first rule to compute the danceability index.

Let $K$ be a link with $da(K)=k$ and suppose $D$ is an oriented diagram for $K$ that admits a $k$-dance-over configuration.
By \autoref{lem:startatbridge}, we can assume that the dancers' initial points are all at the start of bridges.
If every bridge of $D$ has exactly one dancer's initial starting point on it, then for the diagram $D$, $br(D)=da(D)$ which then implies that $br(K)\leq da(K)$. 
So, suppose $br(D)\geq da(D)$. Then, there is a bridge of $D$ without a dancer's initial point on it, and there must be a dancer, denoted dancer $A$,  whose path traverses at least two bridges. Let $a$ denote the initial point of dancer $A$.
The path of dancer $A$ starts just after an underpass (at the start of a bridge), followed by a bridge with at least one overpass, then an underpass, then a second bridge, and an underpass (and possibly more), as shown in \autoref{fig:path_and_slide} (A.).

\begin{figure}
                \centering
                \includegraphics[scale=1]{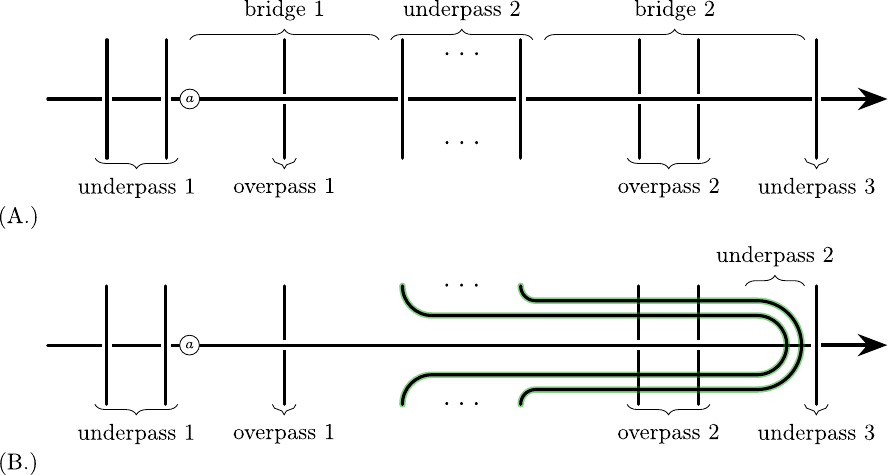}
			    \caption{(A.) A sketch of the path of dancer $A$ across two bridges. (B.) A bridge slide move applied to underpass 2. This is a local sequence of Reidemeister II and III moves to slide underpass 2 over overpass 2.}\label{fig:path_and_slide}
			\end{figure}

   Following \autoref{fig:path_and_slide} (A.), since the $k$-dance-over configuration for $D$ uses the over-first rule, dancer $A$ can freely dance along the first bridge and cross over overpass 1, but then must wait to pass under underpass 2 until other dancers have traveled over those crossings. 
Once all crossings in underpass 2 have been cleared, dancer $A$ can freely pass under underpass 2 and then over the next bridge over the overpass 2.
This implies that none of the crossings in underpass 2 can occur as a crossing in overpass 2 (otherwise dancer $A$ would have to traverse a crossing in underpass 2 as the under-stand first in order to pass over the crossing later, which violates the over-first rule).

Because the crossings in underpass 2 are distinct from the crossings in overpass 2, we can apply a bridge slide move to underpass 2, as shown in \autoref{fig:path_and_slide} (B.). 
A bridge slide is a sequence of Reidemeister II and III moves applied to slide all of the over-strands in underpass 2 past the under-strands of overpass 2 (along the second bridge).
After the bridge slide has been applied, call this new diagram $D^{\ast}$.
In $D^{\ast}$, underpass 2 is adjacent to underpass 3, which eliminates bridge 2; this bridge slide move has joined bridge 1 and bridge 2 into a single bridge and created a new diagram with bridge number reduced by 1.

We now show that the danceability of $D^{\ast}$ has not changed, that is $da(D^{\ast}) = da(D)=k$.
The bridge slide move introduced new crossings between the over strands of underpass 2 and the under strands of overpass 2.
For the dancers on the over-strands of underpass 2, the new crossings are all overcrossings for these dancers, and so they can advance without issue.
Note that in $D$ before the bridge slide was applied, every over-strand of underpass 2 must be danced before dancer $A$ can cross under and then move on to cross over overpass 2. 
All of the dancers on the under-strands of overpass 2 must wait for dancer $A$, who must wait for the over-strand dancers of underpass 2.
So in $D^{\ast}$, all of the dancers on the under-strands of overpass 2 can wait while the over-strand dancers of underpass 2 cross over all new crossings, maintaining danceability with the same original dancers as before the bridge slide was applied.

In conclusion, for every bridge in $D$ that does not have a dancer's initial starting point on it, the bridge slide move can be applied to eliminate that bridge while maintaining the danceability number. 
After all such bridges have been eliminated, we are left with a new diagram $D'$ in which every dancer dancers over exactly one bridge. 
Thus, $br(K)\leq br(D')= da(D')=da(K)$ which concludes the proof.
		\end{proof}




\section{Comparing danceability to the Wirtinger index} \label{sec:wirt}

Blair-Kjuchukova-Velazquez-Villanueva developed a link invariant called the Wirtinger index as the minimum number of ``seed" meridional generators needed to generate the fundamental group of the link compliment with a Wirtinger presentation \cite{BKVV}. 
They describe a coloring procedure to start with initial colored ``seed" strands and generate the link group via iterated application of the Wirtinger relation. We summarize their method in the following definition.



\begin{defn}[\cite{BKVV}]
\label{def:Wirt}
We refer to the arcs in a link diagram as \textbf{strands}. A link diagram $D$ is called \textbf{$k$-partially colored}, where $k$ is the number of starting colors, if there is a specified subset $A$ of the strands of $D$ and a function $f \colon A \rightarrow \{1, 2,\ldots, k\}$. The tuple $(A, f)$ is called a \textbf{$k$-partial coloring}. Given k-partial colorings $(A_1, f_1)$ and $(A_2, f_2)$ of $D$, we say $(A_2, f_2)$ is the result of a \textbf{coloring move} on $(A_1, f_1)$ if
\begin{enumerate}
    \item $A_1 \subset A_2$ and $A_2 \backslash  A_1 = {s_j}$ for some strand $s_j$ in $D$;
    \item $f_2|A_1 = f_1$;
    \item $s_j$ is adjacent to $s_i$ at some crossing $c \in v(D)$, and $s_i \in A_1$;
    \item the over-strand $s_k$ at $c$ is an element of $A_1$;
    \item $f_1(s_i) = f_2(s_j)$.
\end{enumerate}
A diagram $D$ is \textbf{fully colored} if every strand of the diagram has received a color.
The minimum value of $k$ such that $D$ can be fully colored following the above rules is known as the \textbf{Wirtinger number} of the diagram, denoted $\omega(D)$. The \textbf{Wirtinger index} of a link $K$ in $S^3$, denoted $\omega(K)$, is the minimal value of $\omega(D)$ over all diagrams $D$ for $K$.
\end{defn}

At a glance, both the danceability and Wirtinger indices involve coloring (or labeling) the arcs of a link diagram, and both have to satisfy rules at the crossings. 
For the Wirtinger index, two strands $s_i$ and $s_j$ are adjacent if they are the under-strands of a crossing. 
The coloring move described in  \autoref{def:Wirt} suggests that we can color adjacent under-strands the same color if the over-strand at that crossing $s_k$ and one of the two adjacent under-strands $s_i$ or $s_j$ is already colored. 
This coloring move resembles the over-first rule for danceability, that a dancer on the under-strand is able to ``color'' (continue dancing forward) the adjacent under-strand if the over-strand has already been ``colored'' (danced over). 

The main difference is that danceability imposes a restriction on the order in which adjacent under-strands can be colored. When computing the Wirtinger number on an unoriented diagram, it does not matter which under-strand $s_i$ or $s_j$ gets colored first. However, when computing the danceability number, there is a specific order in which $s_i$ and $s_j$ must be colored, as determined by the oriented flow assigned to the diagram which governs the directional movement of the dancers over time. 
As such, the danceability index can be seen as an oriented Wirtinger index.

Due to this additional orientation restriction, we can see that the Wirtinger index is bounded above by the danceability index.

\begin{prop}\label{prop:wirtatmostdance}
    $\omega(K)\leq da(K)$.
\end{prop}

\begin{proof}
The over-first rule is an oriented restriction of the adjacency coloring rule described in \autoref{def:Wirt}. Starting with an oriented link  diagram that realizes an $n$-dance-over configuration with $n$ initial starting points, pick the strands with a dancer's initial point to be the seed strands for the $n$-partial coloring. The timing of the progression of dance can be adjusted so that only one dancer passes through a crossing at any given time. Since the dancers must satisfy the over-first rule, this gives an indexing of the strands which describes a sequence of valid coloring moves of the diagram which result in a completed Wirtinger coloring with $n$ starting seeds.
\end{proof}

The argument in the proof of \autoref{prop:wirtatmostdance} cannot be simply reversed;
 given a Wirtinger coloring of a diagram, it is possible that the seed strands do not correspond to enough initial dancer starting points for a valid dance configuration. For example, \autoref{fig:wirt_comparison} (A.-D.) shows that the $5_2$ knot can be completely colored following the rules in \autoref{def:Wirt} with two seed strands. However, as shown in \autoref{fig:wirt_comparison} (E.-F.), no matter the choice of orientation, there does not exists a 2-dance configuration for the knot \emph{with starting points on the initial seeding strands} for the Wirtinger coloring.
 This example makes it all the more surprising that as link invariants, the Wirtinger index and the danceability index are equal.

 In fact, using the idea in \autoref{fig:wirt_comparison}, one can construct a link diagram $D$ such that $da(D)-\omega(D)$ can be arbitrarily large.

 \begin{figure}[h]
    \centering
    \begin{picture}(300,155)
       \put(0,7){ \includegraphics[width=0.75\linewidth]
       {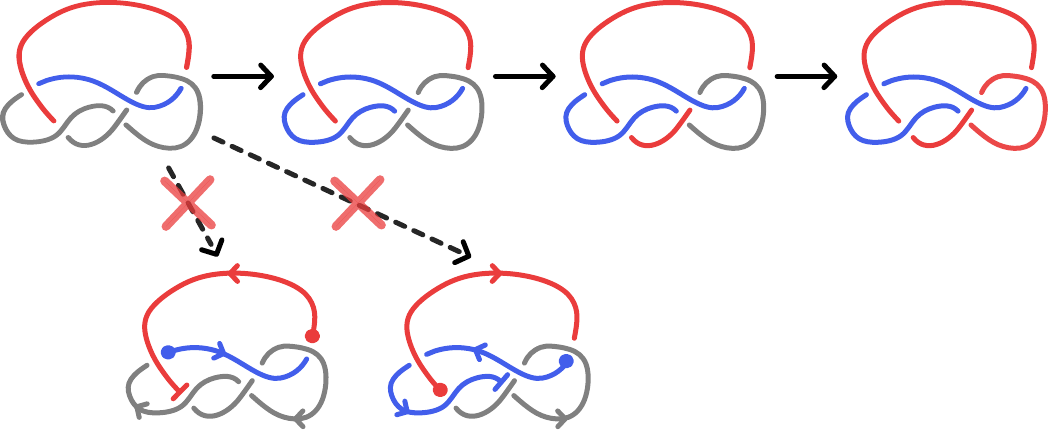} }
       \put(28,150){(A.)}
       \put(117,150){(B.)}
       \put(206,150){(C.)}
       \put(295,150){(D.)}
       \put(68,-4){(E.)}
       \put(150,-4){(F.)}
    \end{picture}
    \caption{Diagram (A.) shows a partially colored knot diagram with seed strands colored blue and red. Following the arrows to the right shows a sequence of valid coloring moves resulting in diagram (D.), a fully colored diagram with Wirtinger number 2. Diagrams (E.) and (F.) are oppositely oriented diagrams with initial dancer starting positions on the two seed strands colored in diagram (A.). Both diagrams (E.) and (F.) require a third dancer to complete the dance.}
    \label{fig:wirt_comparison}
\end{figure}

 \begin{prop}
     For every positive integer $n$, there exist a link $L$ and a diagram $D$ of $L$ with the property that $da(D)-\omega(D)\geq n.$\label{prop:differencebetweendanceandwirt}
 \end{prop}
\begin{proof}
    \autoref{fig:wirt_comparison} belongs to a larger family called ``twist knots.'' Consider the minimum crossing diagram $D'$ of the 6-crossing twist knot with two local maxima with respect to the standard height function on the plane. This assumption guarantees that $\omega(D')=2$ as the strands at the local maxima can function as seeds. The reader can check by hand, or by using our Python code found in \cite{table}, that $da(D') = 3$. That is, the entire diagram cannot be traversed by putting two dancers on any two strands initially. To construct a diagram satisfying the claim, we take $D$ to be the split union of $n$ copies of $D'.$ We see that $\omega(D)=\omega(\sqcup^n D')=2n$. On the other hand, $da(D)=da(\sqcup^n D')=3n$.
\end{proof}

\begin{cor}\label{cor:w=d}
    $br(K)=\omega(K)= da(K)$.
\end{cor}
\begin{proof}
    We showed in \autoref{thm:bridgeEquivDance} that the danceability index is equal to the bridge index, and Blair-Kjuchukova-Velazquez-Villanueva proved that the Wirtinger index is also equal to the bridge index \cite{BKVV}.
\end{proof}



It would be interesting if one could prove \autoref{cor:w=d} without appealing to bridge index.


\section{Danceability index for virtual links }\label{sec:virtual}
Virtual knot theory was first developed by Louis Kauffman in the 1990s \cite{Kauf}.
Virtual links are faithfully described by planar link diagrams with two types of crossings, the usual classical crossings and virtual crossings, up to the virtual Reidemeister moves. (The virtual Reidemeister moves can be found in \cite[Figure 1]{BK}.) Virtual crossings are denoted with a circle around the crossing; see \autoref{fig:4.53_knot} and \autoref{fig:virtual_trefoil} for a examples of virtual knot diagrams. One motivation for virtual knot theory comes from Gauss codes. From \autoref{def:gausscodes}, it is clear that every link diagram admits a Gauss code. However, not all Gauss codes can be realized as planar link diagrams--this is where a virtual crossing is introduced. 

The danceability index can be extended to virtual links in multiple ways. A virtual crossing is topologically and algebraically different from a classical crossing, and this difference may be reflected in the movement of the dancers at a virtual crossing. For example, at a virtual crossing, the dancers' paths may meet in a duet or a physical interaction such as a lift. This would inspire a different crossing rule for virtual crossings than for classical crossings. In the following definition, we give three different options for crossing rules for virtual crossings, which give rise to three danceability invariants for virtual links.



\begin{defn}

An  \textbf{unrestricted $n$-dance configuration} (resp. \textbf{coincident} or \textbf{smoothing $n$-dance configuration}) for an oriented virtual link diagram is $n$ initial points on the diagram satisfying conditions (1), (2), and (3), while following the over-first rule\footnote{Here we choose the over-first rule, but one could also choose the under-first rule. } at every classical crossing, as stated in \autoref{defn:configuration}, as well as the respective fourth condition at any virtual crossings.

\begin{enumerate}

    \item[(4)] The speed that each dancer travels can vary, and can be chosen so that the simultaneous tracing of the diagram by all $n$ dancers follows the respective rule.
    \begin{itemize}
    \item[-] \underline {Unrestricted rule:} At every virtual crossing, a dancer can freely pass through the crossing without any time restriction or consideration of the other dancers' paths.

    \item[-] \underline {Coincident rule:} At every virtual crossing, two dancers must pass through the crossing at the same time. If a dancer reaches a virtual crossing alone, they must wait until another dancer is available to pass with them simultaneously. 

    \item[-] \underline{Smoothing rule:}
    At every virtual crossing, choose an $\varepsilon$-radius neighborhood of the crossing that does not intersect the link diagram outside of the virtual crossing. Two dancers must enter and exit the neighborhood at the same time\footnote{One could omit this time restriction and simply smooth the virtual crossings, creating a dance configuration for the resulting classical link diagram. However, from the perspective of a live dance performance, the audience would not be able to distinguish a dance of a smoothed virtual link from a classical link. So, we chose to add the time restrictions at the virtual crossings to demonstrate that the dancer is at a virtual crossing.}. In the neighborhood, the dancers follow a local smoothing of the crossing, as shown in the diagram in \autoref{fig:smoothing_rule}. If a dancer reaches a neighborhood alone, they must wait until another dancer reaches the neighborhood to pass with them simultaneously.
    \end{itemize}
\end{enumerate}
\end{defn}

\begin{figure}[h]
    \centering
    \includegraphics[scale=.55]{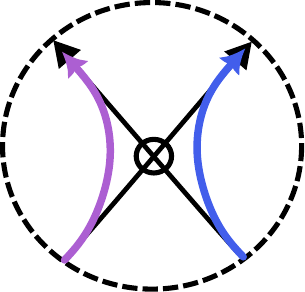}
    \caption{Two dancers' paths following the smoothing rule at a virtual crossing.}
    \label{fig:smoothing_rule}
\end{figure}

Following the same procedure outlined in \autoref{sec:DanceabilityIndex} for classical links, these three types of dance configurations induce three virtual link invariants.

\begin{defn}
    For an oriented virtual link diagram $D$, the \textbf{unrestricted number}, denoted $da^{(u)}(D)$, is the minimum number $n$ so that there exists an unrestricted $n$-dance configuration of $D$. The \textbf{coincident number} and \textbf{smoothing number} are defined analogously, and denoted $da^{(c)}(D)$ and $da^{(s)}(D)$ respectively.
\end{defn}

\begin{defn}\label{def:virtualda}
    For a virtual link $K$, the \textbf{unrestricted danceability index}, denoted $da^{(u)}(K)$, is the minimum unrestricted number over all diagrams representing $K$, with any choice of orientation. 
    The \textbf{coincident danceability index} and \textbf{smoothing danceability index} are defined analogously, and denoted $da^{(c)}(K)$ and $da^{(s)}(K)$ respectively.
\end{defn}

In \autoref{fig:4.53_knot}, we give an example of a virtual knot diagram with the three different danceability configurations.
The diagram represents the virtual 4.53 knot, and we denote it by $D$. \autoref{fig:4.53_knot} (A.) shows an unrestricted dance configuration for $D$ with two dancers, \autoref{fig:4.53_knot} (B.) shows a coincident dance configuration for $D$ with three dancers, and \autoref{fig:4.53_knot} (C.) shows a smoothing dance configuration for $D$ with three dancers. By simple trial and error, one can check that these are the minimal number of dancers required to dance this diagram, so $da^{(u)}(D) = 2$, $da^{(c)}(D) = 3$, and $da^{(s)}(D) = 3$.  We can see that depending on which definition of danceability is used, the minimum number of dancers required to traverse a given virtual diagram may differ. 

\begin{figure}[t]
\centering
\begin{picture}(350,120)
\put(0,0){\includegraphics[width=0.8\linewidth]{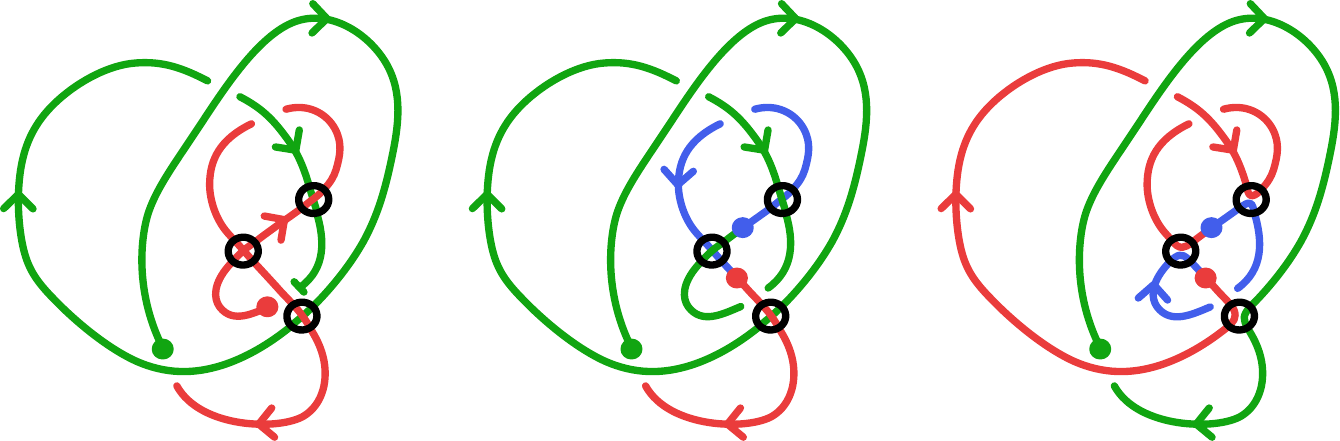}}
\put( 15,6){(A.)}
\put(140,6){(B.)}
\put(265,6){(C.)}
\end{picture}
\caption{The virtual 4.53 knot diagram with (A.) an unrestricted 2-dance configuration, (B.) a coincident 3-dance configuration, and (C.) a smoothing 3-dance configuration.}
    \label{fig:4.53_knot}
\end{figure}

Notice that in an unrestricted dance configuration, dancers can meet at a virtual crossing simultaneously and do a duet or pass through at different times, or even a single dancer can pass through a virtual crossing as both strands at separate times.
Thus, by definition, every coincident $n$-dance configuration is also an unrestricted $n$-dance configuration.
\begin{cor}\label{cor:ulessc}
    For all classical and virtual links $K$, $da^{(u)}(K)\leq da^{(c)}(K)$.
\end{cor}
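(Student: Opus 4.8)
The plan is to unwind the definitions and observe that being a coincident configuration is strictly stronger than being an unrestricted one, so the minimum over the larger (unrestricted) class is no larger than the minimum over the smaller (coincident) class. Concretely, I would start by letting $n = da^{(c)}(K)$ and fixing an oriented diagram $D$ representing $K$ together with a coincident $n$-dance configuration of $D$ realizing this minimum; such a $D$ exists by Definition \ref{def:virtualda}. This configuration consists of $n$ initial points on $D$ and a choice of (variable) speeds for the $n$ dancers satisfying conditions (1), (2), (3) together with the coincident rule at every virtual crossing.

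Next I would check that the very same data — the same $n$ initial points and the same speeds — constitutes an unrestricted $n$-dance configuration of $D$. Conditions (1), (2), (3) and the over-first rule at classical crossings are identical in both definitions, so nothing changes there. The only difference is condition (4) at virtual crossings: the coincident rule requires two dancers to pass simultaneously, whereas the unrestricted rule imposes no timing restriction whatsoever at virtual crossings. Since every timing pattern permitted under the coincident rule is in particular permitted under the unrestricted rule (the unrestricted rule forbids nothing), the coincident configuration automatically satisfies the unrestricted fourth condition. This is exactly the observation recorded in the sentence preceding the corollary: every coincident $n$-dance configuration is an unrestricted $n$-dance configuration.

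Finally I would conclude by minimality. The diagram $D$ admits an unrestricted $n$-dance configuration, so $da^{(u)}(D) \le n$, and hence $da^{(u)}(K) \le da^{(u)}(D) \le n = da^{(c)}(K)$, which is the claimed inequality; the argument applies verbatim whether $K$ is classical (in which case there are no virtual crossings and the two invariants in fact coincide) or virtual.

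\textbf{Main obstacle.} There is essentially no obstacle: the statement is a containment of configuration classes, and the only thing to verify is that relaxing the virtual-crossing rule cannot invalidate a configuration — which is immediate from the fact that the unrestricted rule is vacuous at virtual crossings. The proof is a few lines and the subtlety, if any, is purely bookkeeping about which conditions the two definitions share.
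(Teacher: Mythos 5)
Your proposal is correct and matches the paper's argument: the paper likewise notes immediately before the corollary that every coincident $n$-dance configuration is by definition an unrestricted $n$-dance configuration, and the inequality then follows by minimality exactly as you conclude.
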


We can also see there is a relationship between the coincident danceability index and the smoothing danceability index. 
There is a 1-1 correspondence between the coincident $n$-dance configurations and the smoothing $n$-dance configurations of any diagram. The idea is that at virtual crossings, dancers can ``trade roles" mid-dance and complete the configuration (following the different virtual crossing rule) by dancing someone else's path. 
The result of this is that the coincident danceability index is equal to the smoothing danceability index.

\begin{prop}
    For all classical and virtual links $K$, $da^{(c)}(K)=da^{(s)}(K)$.
\end{prop}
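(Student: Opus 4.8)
The plan is to establish a bijection between smoothing $n$-dance configurations and coincident $n$-dance configurations of an \emph{arbitrary} diagram $D$, from which the equality of the minima follows immediately. The key observation is that both rules force exactly two dancers to be simultaneously present at (a neighborhood of) each virtual crossing, and the only difference is what the dancers do inside that neighborhood: under the coincident rule they pass straight through, so a dancer exits along the strand it entered; under the smoothing rule they follow the local smoothing, so the two dancers swap strands. So given a smoothing configuration, I would build the corresponding coincident configuration by ``trading roles'': at each virtual crossing, when two dancers meet in the neighborhood and are about to cross under the smoothing, instead have each continue along its own strand — but then relabel, so that each dancer picks up the tail of the path the other dancer was going to dance. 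One must check this relabeling is globally consistent: start with the $n$ starting points of the smoothing configuration; trace the physical paths of the dancers through the diagram, but every time a virtual crossing is encountered, split and reconnect the path-segments according to the straight-through convention rather than the smoothed one. This re-partitions the underlying curve of $D$ into $n$ arcs again (since reconnecting a crossing either way yields a valid partition into the same number of components — here we want the partition that keeps the knot connected, which is guaranteed because the original smoothing partition did), and the timing function is inherited unchanged.

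The key steps, in order, are: (1) fix a diagram $D$ and an $n$-smoothing configuration, described as $n$ starting points together with a speed/timing assignment; (2) for each virtual crossing choose the $\epsilon$-neighborhood from the smoothing rule, and record the two times (entry and exit) at which the neighborhood is occupied; (3) define the new arc-decomposition of $D$ by reconnecting each virtual crossing ``straight through'' rather than ``smoothed,'' and observe this is again a decomposition into $n$ arcs whose union is all of $D$ — here one invokes that the knot's underlying 4-valent graph is unchanged and only the pairing of local strand-ends at virtual crossings is being swapped back to the crossing's actual pairing; (4) transport the timing: each point of $D$ is danced at the same moment as before, so the over-first rule at classical crossings is automatically preserved, and at each virtual crossing the two dancers still enter and exit the neighborhood at the same times, hence the coincident rule holds; (5) check this map is a bijection by running the same construction with the roles of ``smoothed'' and ``straight-through'' reversed, which is visibly an inverse; (6) conclude $da^{(s)}(D)=da^{(c)}(D)$ for every $D$, and take the minimum over diagrams and orientations to get $da^{(s)}(K)=da^{(c)}(K)$.

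The main obstacle I anticipate is step (3)–(4): making precise the claim that swapping the local pairing at a virtual crossing still yields a partition of the \emph{same} curve into the \emph{same} number $n$ of arcs with well-defined start points, and that ``the same point is danced at the same time'' really is a coherent reparametrization. Concretely, after reconnecting at a virtual crossing the two path-segments through that crossing get cut and reglued, so a single dancer's path may fragment and the fragments must be reassembled into $n$ closed-up arcs; one needs to argue this reassembly is forced and gives exactly $n$ arcs (not more, not fewer), and that the induced starting points are again legitimate. A clean way to handle this is to note that a dance configuration is equivalent data to a choice of $n$ points on the curve plus a monotone ``global time'' function on the curve minus those points, and the construction only changes the $n$ points (and how local strand-ends are glued at virtual crossings), never the time function; then consistency of the new point set reduces to a purely combinatorial statement about the two ways of resolving a 4-valent vertex, which can be dispatched with a short figure analogous to Figure~\ref{fig:smoothing_rule}. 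Everything else — preservation of the over-first rule at classical crossings, preservation of the simultaneity condition at virtual crossings — is then immediate from the fact that timing is untouched.
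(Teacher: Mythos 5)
Your proposal is correct and follows essentially the same route as the paper: a bijection between coincident and smoothing $n$-dance configurations obtained by swapping the dancers' roles inside small neighborhoods of each virtual crossing while leaving all timing (and hence the over-first rule at classical crossings and the simultaneity at virtual crossings) untouched, then minimizing over diagrams. The consistency worry you flag in steps (3)--(4) is dispatched in the paper exactly as you suggest: the starting points and the timed path-portions are unchanged, only \emph{who} dances each remaining portion is relabeled, so the reassembly into $n$ dancer-paths is automatic.
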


\begin{proof}
    For any link diagram $D$ and for every $n$, there is a 1-1 correspondence between the coincident $n$-dance configurations and the smoothing $n$-dance configurations of $D$. 
    To see this, fix a link diagram $D$ and a positive integer $n$.
    Given a coincident $n$-dance configuration for $D$, construct a smoothing $n$-dance configuration by following this procedure at every virtual crossing:
    \setdefaultleftmargin{.8cm}{}{}{}{}{}
    \begin{enumerate}
     \item   Choose a small $\varepsilon$-radius neighborhood that does not intersect $D$ outside of the crossing.
    \item Since we started with a valid coincident $n$-dance configuration, there are two dancers that pass through the virtual crossing at the same time: call them dancers $A$ and $B$. By altering their speeds slightly, the dancers can be made to enter the $\varepsilon$ neighborhood of the crossing at the same time. In the neighborhood, exchange the dancers' paths from $\dpcross$  to $\smooth$. (Instead of the dancers passing through the virtual crossing at the same time,  they ``bounce" off each other and swap positions.) Dancer $A$ leaves the neighborhood in dancer $B$'s place, and vice versa.
    \item Dancer $A$ dances the remainder of the configuration in dancer $B$'s place and dancer $B$ dances the remainder of the configuration in dancer $A$'s place. 
    
     \end{enumerate}
    Since the overall paths that are danced, and the times in which they are danced, are not affected outside small neighborhoods of the virtual crossings (although \emph{who} is dancing the paths has changed a lot), the resulting configuration is a valid smoothing $n$-dance configuration. 
    If the starting diagram $D$ did not have virtual crossings, then the coincident $n$-dance configuration is vacuously a smoothing $n$-dance configuration.
    Since the smoothing rule has time constraints on when two dancers must enter and exit a neighborhood of a crossing, it is clear to see that the  procedure described above is reversible, showing the desired 1-1 correspondence.

As a result, for any diagram $D$, the minimal coincident number is equal to the minimal smoothing number, which implies the coincident danceability index is equal to the smoothing danceability index for every link.
\end{proof}

As stated in the two previous proofs, if a diagram has no virtual crossings, then it is vacuously true that an unrestricted $n$-dance configuration is also a coincident $n$-dance configuration. It is also clear that if a diagram has no virtual crossings, then an unrestricted (or coincident) $n$-dance configuration is also just a (classical) $n$-dance-over configuration for $D$. This leads to the following, vacuously true, corollary.

\begin{cor}
    If a diagram $D$ has no virtual crossings, then $da(D)=da^{(u)}(D)=da^{(c)}(D)$.
\end{cor}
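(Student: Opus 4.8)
The plan is to simply unwind the definitions and observe that, in the absence of virtual crossings, the three virtual notions of dance configuration coincide on the nose with the classical one. Recall that an unrestricted (respectively, coincident or smoothing) $n$-dance configuration of an oriented virtual diagram is by definition a choice of $n$ initial points satisfying conditions (1), (2), and (3) with the over-first rule imposed at every classical crossing, together with the appropriate version of condition (4) imposed at every virtual crossing. The key point is that if $D$ has no virtual crossings, then condition (4) is vacuous in all three cases: there is nothing to check at a virtual crossing because there are none. Hence, for such a $D$ and each fixed $n$, the collections of unrestricted, coincident, and smoothing $n$-dance configurations are literally equal, and each equals the collection of $n$-dance-over configurations of $D$ in the sense of Definition \ref{defn:configuration}.

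First I would fix an orientation on $D$ and record this set-level identification for each $n$. Then, taking the minimum over $n$ on each side, the over-first danceability number $da^o(D)$ equals $da^{(u)}(D)$, $da^{(c)}(D)$, and $da^{(s)}(D)$. Since the danceability number of a diagram is computed using the over-first rule (this is the convention in force for the virtual invariants, and the one used in the proof of Theorem \ref{thm:bridgeEquivDance}), we have $da(D)=da^o(D)$, which yields the claimed chain $da(D)=da^{(u)}(D)=da^{(c)}(D)$ (and, consistently with the preceding proposition, also $=da^{(s)}(D)$).

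There is essentially no obstacle here; the only point requiring care is the bookkeeping about which crossing rule is the default — namely, that all three virtual definitions use the over-first rule at classical crossings, matching the convention under which $da(D)$ is computed — so that the equality holds exactly rather than merely up to the retrograde trick. This is why the corollary is, as stated, vacuously true.
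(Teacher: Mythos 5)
Your proposal is correct and matches the paper's reasoning exactly: the paper also treats the corollary as vacuously true, observing that with no virtual crossings condition (4) imposes nothing, so unrestricted, coincident, and smoothing configurations coincide with classical $n$-dance-over configurations of $D$, and the diagram-level minima agree. Your extra care about the over-first convention is consistent with the paper's setup and introduces nothing different in substance.
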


It is natural to question if these equalities extend to the danceability invariants when applied to classical links: for a classical link $K$, does $da(K)=da^{(u)}(K)=da^{(c)}(K)$?
This question centers around whether these invariants are minimized on a classical diagram; is it possible to decrease the invariants by adding virtual crossings. We will address this further in \autoref{sec:bridge_dance_virtual}.



The next natural question to ask is which, if any, of the virtual danceability indices equal the bridge number for virtual links. This turns out to be a more subtle question than in the classical case because there are different, non-equivalent, definitions of the bridge index for virtual links. Before we address this question, we discuss the various definitions of the bridge index for virtual links.

\subsection{Bridge index(es) and Wirtinger index for virtual links}

We begin with the following definition, which extends the notion of a bridge to the virtual setting.

\begin{defn}
    An \textbf{over-bridge} of a virtual link diagram is a maximal length unbroken arc of the diagram which includes at least one classical overcrossing. An arc can pass through a virtual crossing and remain unbroken, so an over-bridge is an arc of the diagram containing only classical overcrossings, and potentially some virtual crossings as well.
\end{defn}

Now we recall two definitions of the bridge index for virtual links, corresponding to two versions of the bridge index for classical links.

\begin{defn}[\cite{NS}]
The first definition of the \textbf{bridge index} for virtual links, denoted $b_1$, is given by the minimum number of over-bridges across all possible diagrams of a virtual link.
The second definition of the \textbf{bridge index} for virtual links, denoted $b_2$, is the minimum number of local maxima taken over all planar diagrams that are Morse with respect to a height function. 
\end{defn}

For classical links, the definition of $b_1$ agrees with the usual (combinatorial) definition of the bridge index, and the definition of $b_2$ agrees with the Morse-theoretic definition of the bridge index.
As mentioned in \autoref{thm:bridge}, for classical links these two definitions are always equal; however for virtual links this is not the case. 

Nakanishi-Satoh proved that if a virtual link $K$ has $b_2(K)$ equal to 1, then $K$ must be trivial \cite{NS}. (This aligns with the classical bridge index result that all non-trivial classical links must have bridge index at least 2.)
However, it is possible for a non-trivial virtual link to have $b_1$ equal to 1. 
For example, the 2.1 virtual trefoil, denoted $T_v$ and shown in \autoref{fig:virtual_trefoil}, has $b_1(T_v)=1$, and since $T_v$ is non-trivial, $b_1(T_v)=1<b_2(T_v)$.
In general, Nakanishi-Satoh proved that $b_1(K) \leq b_2(K)$ for all virtual links $K$, and this inequality can be strict \cite{NS}. 
Later, the authors of \cite{NPV} showed that the difference between $b_1(K)$ and $b_2(K)$ can be arbitrarily large. A stronger result stating that for any positive integers $m \leq n$, there is a virtual link $K$ with
$b_1(K) = m$ and $b_2(K) = n$, was also proved in \cite{KPW}.

\begin{figure}
    \centering
    \includegraphics[scale=0.5]{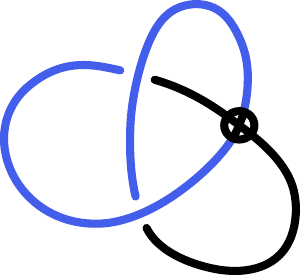}
    \caption{A Morse diagram for the 2.1 virtual trefoil knot, $T_v$, which achieves the minimal virtual bridge indexes $b_1(T_v) = 1$ and $b_2(T_v)=2$. 
    }
    \label{fig:virtual_trefoil}
\end{figure}

In \cite{PP}, one of the authors defined the Wirtinger index $\omega(K)$ for virtual links, and proved that $\omega(K)=b_1(K)$, using the characterization of virtual links as links in thickened surfaces. The definition of $\omega(K)$ is the same as \autoref{def:Wirt} except that a strand may include virtual crossings. Put differently, a strand can be extracted from a Gauss code as a string of positive integers, which
may be empty, appearing between two negative integers.



\subsection{Bridge and danceability index(es) for virtual links} \label{sec:bridge_dance_virtual}

For virtual links, which danceability index equals which bridge index? One relationship is clear to see; the unrestricted danceability index is less than or equal to the $b_1$ bridge index. Since in both definitions, the virtual crossings offer no constraints (an over-bridge arc is not broken at a virtual crossing, and an unrestricted dancer can freely pass through a virtual crossing at any time), the same proof as \autoref{prop:upperbound} applies in this virtual context.

\begin{prop}\label{prop:danceatmostbridge}
    For any virtual link $K$, $da^{(u)}(K)\leq b_1(K)$.
\end{prop}
\begin{proof}
Let $D$ be a diagram of the virtual link $K$ realizing the $b_1$ bridge index of $K$, that is, $b_1(D)=b_1(K)=n$.
On every over-bridge, place a dancer's initial starting point. 
Each dancer can advance forward across all over-bridges at the same time, satisfying the over-first rule at every classical crossing, and freely passing through any virtual crossings along the way. 
After this, all classical crossings have been crossed over as the over-strand first. 
Hence all dancers can continue to dance all arcs that are not over-bridges until the entire diagram is traversed. 
This shows that $D$ admits an unrestricted $n$-dance-over configuration, and so $da^{(u)}(K)\leq da^{(u)}(D)\leq b_1(D)=b_1(K)$.
\end{proof}

\begin{thm}\label{thm:da=b}
    For any virtual link $K$,  $da^{(u)}(K)\geq \omega(K)$ and  $da^{(u)}(K)=b_1(K)$.
\end{thm}

\begin{proof}
 The proof is essentially the same as \autoref{prop:wirtatmostdance}, but now we use the definition of strands for virtual links. That is, we start with an oriented link diagram that realizes an $n$-dance-over configuration with $n$ initial starting points. Pick the strands with a dancer's initial point to be the seed strands. It follows that $D$ can be fully colored with $n$ seed strands.

 Since $\omega(K)=b_1(K)$ as proved in \cite{PP}, and $b_1(K)\leq da^{(u)}(K)$ by \autoref{prop:danceatmostbridge}, then it follows that $da^{(u)}(K)=b_1(K)$.
\end{proof}

\begin{cor} 
    For any classical link $K$,  $da(K)=da^{(u)}(K).$
\end{cor}

\begin{proof}
    By \cite{BR}, the first bridge index does not decrease when $K$ is considered as a virtual link, so $br(K)=b_1(K)$ and therefore $da(K)=b_1(K)$.  By \autoref{thm:da=b}, $b_1(K)=da^{(u)}(K)$. 
\end{proof}

The question of whether a classical invariant can be decreased in a virtual link diagram is usually answered using parity projection techniques, as was done in \cite{BR} for the bridge index. We suspect that one could apply similar parity projection techniques to prove $da^{(c)}$ is also minimized for classical links on classical diagrams to prove the following conjecture.

\begin{conj}
    For any classical link $K$, $da(K)=da^{(c)}(K)$.
\end{conj}

\begin{remk} For classical links, we proved that the danceability index is equal to bridge index without appealing to Wirtinger index. The proof of \autoref{thm:bridgeEquivDance} does not extend to all virtual links because the bridge slide move might require the Over-Crossings-Commute (OCC) relation (also called ``Welded Reidemeister move'' or ``forbidden move''). 
To illustrate this, in \autoref{fig:virtual_path_and_slide} we show the bridge slide move in bridge 2 with specific crossings chosen to be virtual. 
When performing a bridge slide, any strand with a virtual crossing in underpass 2 can be slid to the right over both the classical and virtual crossings of overpass 2 by using the virtual Reidemeister II and III moves (the strand highlighted in green in \autoref{fig:virtual_path_and_slide}). Sliding this green strand creates virtual crossings on every strand in overpass 2.
On the other hand, any strand with a classical crossing in underpass 2 can be slid to the right \emph{only} over classical crossings in overpass 2 (the strand highlighted in red in \autoref{fig:virtual_path_and_slide}). To slide the red strand over a virtual crossing requires OCC. The quotient of the set of virtual links by the OCC relation is called the set of ``welded links.'' So, the bridge slide relation is allowable on welded links, and the proof of \autoref{thm:bridgeEquivDance} can only be used to prove the danceability index equals $b_1$ for welded links, not for all virtual links.

\begin{figure}[h]
    \centering
    \includegraphics[scale=1]{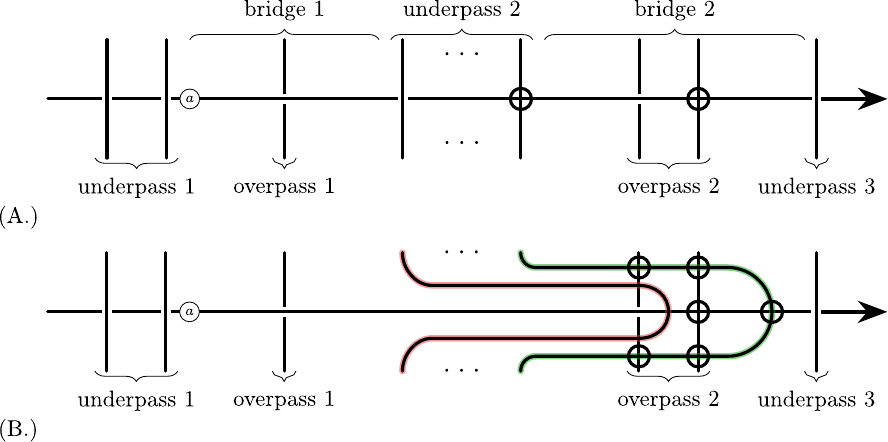}
    \caption{A bridge slide move applied to underpass 2 for virtual links with problematic red strand. }\label{fig:virtual_path_and_slide}
\end{figure}

\end{remk}

Now we turn to $b_2$: how does $b_2$ compare to these danceability indices? The short answer is that we do not know yet, but we can define unoriented versions of danceability that will be related to $b_2.$ In \cite{NPV}, the authors strove to find an analogue of Wirtinger index for $b_2(K)$ in the virtual setting. They introduced the notion of semi-Wirtinger number\footnote{In their paper, the authors used ``number'' to refer to the virtual link invariant, not just the diagram invariant, as is our convention.} $sw(K)$. While $sw(K)$ gives a better estimate for $b_2(K)$ than $\omega(K)$ for virtual links, it is unknown whether $b_2(K)=sw(K).$ 

Based on the discussions in this paper, we see that the coincident danceability index can be thought of as an ``oriented" semi-Wirtinger number, where the coincident rule is applied to classical crossings instead of virtual crossings. In fact, there is a unified way to compare the versions of Wirtinger indices, danceability indices, and quandle-theoretic invariants summarized in the following table. We leave some entries unnamed as an exercise for the readers to come up with the names for the invariants.

\vspace{1em}
\begin{center}
    \begin{tabular}{|c|c|c|}
\hline
    \textbf{Danceabilities} (oriented Wirtinger) & \textbf{Wirtinger indices} & \textbf{Quandle theories} \\
\hline
unrestricted $d^{(u)}$ & $\omega$ & quandles \\
\hline
coincident $d^{(c)}$ & $sw^{(v)}$ & virtual quandles \\
\hline
unnamed & $sw$ & biquandles \\
\hline
unnamed & unnamed & virtual biquandles \\
\hline 
\end{tabular}
\end{center}
\vspace{1em}

Here, the invariant $sw^{(v)}$ is defined in the exact same way as $sw$, except that a strand can include overcrossings, but not virtual crossings. The following statement holds by how we designed the table above. 

\begin{prop}
  $sw^{(v)}(K)\leq da^{(c)}(K)$.
\end{prop}\begin{proof}
    We start with a valid coincident $n$-dance configuration. The coincident rule is an oriented restriction of the adjacency coloring rule for the virtual semi-Wirtinger number. Turn every semi-arc (which in this case, is a portion of the link diagram containing only overcrossings) containing the dancer into a seed strand. The timing of the progression of dance can be adjusted so
that only one dancer passes through a classical crossing and two dancers pass through a virtual crossing at any given time. Since the dancers must
satisfy the coincident rule, this gives an indexing of the strands which describes a sequence
of valid coloring moves of the diagram which result in a completed coloring with $n$ starting
seed strands.
\end{proof}

The table above opens doors for algebraic lower bounds for all these versions of danceability. For definitions of the quandles involved, readers can consult \cite{ceniceros2009virtual,nelson2016quotient}. Essentially, one can define a presentation for any entry in the third column of the table above from a diagram by first defining what a strand is. That is, one decides whether a strand is an arc that is allowed to include over and virtual crossings (quandles), overcrossings only (virtual quandles), virtual crossings only (biquandles), or no crossings at all (virtual biquandles). Then, when strands interact at crossings, we get a relation in the presentation. 

Since the second row from the table is more appropriate for this section, we demonstrate the lower bound with virtual quandles, but the readers can work out the analogous result for any row of the table. Therefore, an alternative way to define the coincident danceability index is the minimum number of generators for the fundamental virtual quandles coming from propagation of certain initial strands of the diagrams by the virtual quandle rules.

\begin{defn}
A \textbf{quandle} is a set $Q$ equipped with an operation $\triangleright \colon Q\times Q\to Q$ 
satisfying for all $x,y,z\in Q$:
\begin{itemize}
\item[(i)] $x\triangleright x=x$,
\item[(ii)] The map $f_y \colon Q\rightarrow Q$ defined by $f_y(x)=x\triangleright y$ is a bijection, and
\item[(iii)] $(x\triangleright y)\triangleright z=(x\triangleright z)\triangleright(y\triangleright z)$.
\end{itemize}
\end{defn}

\begin{defn}
Let $Q$ be a quandle and $v \colon Q\to Q$ a quandle automorphism.
We say that $(Q,v)$ is a \textbf{virtual quandle} if $v$ satisfies
\[v(x\triangleright y)=v(x)\triangleright v(y).\]
\end{defn}

\begin{example}
We state an example originally given in  \cite{farinati2019virtual}. The virtual quandle $(S,i_{(2,3)})$ on the set $\{1,2,3\}$ defined by $x\triangleright y = 2y-x \pmod 3$ and $i_{(2,3)}(1)=1,i_{(2,3)}(2)=3,$ and $i_{(2,3)}(3)=2.$
\end{example}

\begin{defn}
A \textbf{virtual quandle coloring} of a virtual link diagram $D$ is an assignment of a virtual quandle element to each strand of $D$ such that the relations in \autoref{fig:virtquan} are satisfied at all classical and virtual crossings.
\end{defn}

\vspace{.5em}
\begin{figure}[h]
\labellist
\small\hair 2pt
\pinlabel $x\triangleright y$ at -4 138
\pinlabel $y$ at -12 6
\pinlabel $x$ at 142 6
\pinlabel $y$ at 272 6
\pinlabel $x$ at 426 6
\pinlabel $v(y)$ at 277 143
\pinlabel $v^{-1}(x)$ at 419 143
\endlabellist
    \centering    \includegraphics[scale=.4]{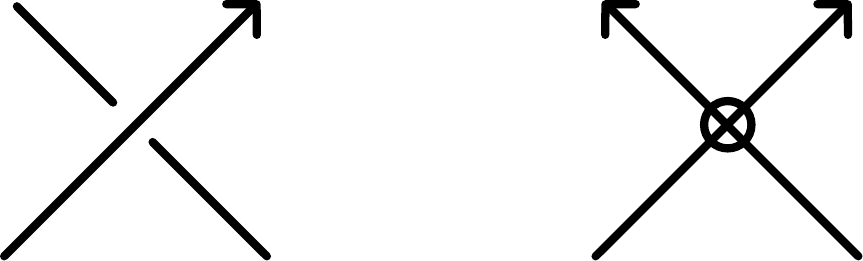}
    \caption{A label of strands of a diagram is a virtual quandle coloring if these relations are satisfied at all classical and virtual crossings}\label{fig:virtquan}
\end{figure}

\begin{example}
    The Kishino link in \autoref{fig:dancekish} (left) admits nine colorings by $(S,i_{(2,3)})$. In this figure, $a$ and $b$ can be any element of $\{1,2,3\}$, but the middle two strands are always colored with element 1.
\end{example}

We show that the coincident danceability index can be arbitrarily large without appealing to the lower bound from $da^{(u)}$. We accomplish this by showing that a virtual link can admit no non-trivial quandle colorings, but can admit an arbitrarily high number of non-trivial virtual quandle colorings. Let $Col_V(K)$ denote the number of colorings by a virtual quandle. It is well known that virtual Reidemeister moves do not affect $Col_V(K)$, and hence, $Col_V(K)$ is a virtual link invariant \cite{ceniceros2009virtual}.

\begin{lem}
    For any virtual link $K$, $\log_{|V|} Col_V(K) \leq da^{(c)}(K)$.\label{lem:relatevirtquanwithdance}
\end{lem}

\begin{proof}
    Using the coincident rule, we can place $da^{(c)}(K)$ dancers on the strands of a diagram $D$ and the dancers will dance the entire diagram. This means that one can label all strands of $D$ with virtual quandle elements by assigning labels at the $da^{(c)}(K)$ strands where the dancers are and then using the virtual quandle rules in \autoref{fig:virtquan} at each crossing to propagate the labels. The resulting propagation of labels may give rise to a virtual quandle coloring, or it may not. In conclusion, $Col_{|V|}(K)\leq |V|^{da^{(c)}(K)}$. The inequality is sharp when any propagation from the labels at the initial dancers becomes a virtual quandle coloring.
\end{proof}

A quandle can be thought of as a virtual quandle where $v$ is the identity map. So, a function of the number quandle colorings bounds $da^{(u)}$ from below. A function of the number virtual quandle colorings bounds $da^{(c)}$ from below by \autoref{lem:relatevirtquanwithdance}. The following theorem is a step in the right direction towards exhibiting the difference between $da^{(u)}$ and $da^{(c)}.$

\begin{thm}
    There exists an infinite family of virtual knots  $\{K_n\}_{n\in\mathbb{N}}$ such that $$\lim_{n\rightarrow \infty} \left[da^{(c)}(K_n)-\log_{|Q|} Col_Q(K_n)\right] = \infty$$ for any choice of finite quandle $Q$ (i.e., a virtual quandle with trivial automorphism).
\end{thm}

\begin{proof}
    Let $K_n$ be a connected sum of $n$ copies of the Kishino knot depicted in \autoref{fig:dancekish}. We see that $K_n$ is trivial as a welded knot. By a result in \cite{fenn1997braid}, if two knots $K$ and $J$ are welded equivalent, then $Col_Q(K)=Col_Q(J)$. It follows that $\log_{|Q|} Col_Q(K_n) = 1$ for any finite quandle $Q$. On the other hand, there is a virtual quandle $V$ of order 3 such that $Col_V(K)=9$ (see \cite[Example 37]{farinati2019virtual}). The readers can also check that one same strand in each of the nine colorings receives the label 1. We can perform a connected sum operation at this arc to create $K_n$ as shown on the right of \autoref{fig:dancekish}. Using induction, we can see that $Col_V(K_n) = 9^n.$
\end{proof}

Note that just because $\log_{|Q|} Col_Q(K_n) = 1$, this does not mean that $da^{(u)}(K_n) = 1$. Rather we mean to say that virtual knots with only trivial quandle colorings are good candidates to create large gaps between $da^{(u)}$ and $da^{(c)}$.

\begin{figure}
\labellist
\small\hair 2pt
\pinlabel $a$ at -9 133
\pinlabel $b$ at -7 30
\pinlabel $1$ at 23 80
\pinlabel $1$ at 87 80
\endlabellist
    \centering    \includegraphics[scale=.6]{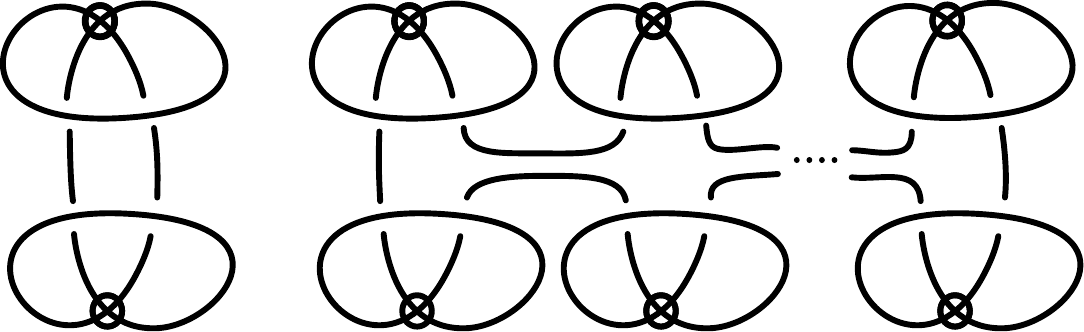}
    \caption{Left: the Kishino link $K$. Right: a connected sum of $n$ copies of Kishino links.}\label{fig:dancekish}
\end{figure}

To add one more invariant into the mix, there is a virtual analogue of Alexander's theorem which states every virtual link is the closure of a virtual braid \cite{Kauf} (see also \cite{KL}). 
As such, the ``virtual braid index'' (the minimum number of strands in a virtual braid presentation for the link), denoted $\beta(K)$, is a virtual link invariant. 
Just as in the classical setting, the $b_2$ bridge index is bounded above by the virtual braid index; the closure of an $n$-stranded braid is a Morse embedding of the virtual link with exactly $n$ maximums. 
Following essentially the same proof that the classical danceability index is bounded above by the braid index from \cite{ASS}, we can show that the coincident danceability index is bounded above by the virtual braid index.

\begin{prop}
    For all virtual links $K$, $da^{(c)}(K)\leq \beta(K)$.
\end{prop}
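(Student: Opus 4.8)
The plan is to imitate the proof of Proposition \ref{prop:upperbound} verbatim, checking that virtual crossings do not obstruct any step. First I would let $D$ be a virtual diagram of $K$ realizing $b_1$, so that $D$ has exactly $b_1(K)=n$ over-bridges. As in the classical argument, I would place one dancer's initial starting point at the beginning of each over-bridge, giving $n$ dancers in total, with each dancer's path running from one over-bridge start to the next (cyclically), so that together the dancers traverse all of $D$.

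The key observation is that an over-bridge is a maximal unbroken arc consisting only of classical overcrossings and possibly some virtual crossings. Following the over-first rule at classical crossings and the unrestricted rule at virtual crossings, every dancer can advance forward along its entire over-bridge simultaneously: the classical crossings encountered along the way are all overcrossings (no waiting required), and the virtual crossings impose no timing constraint whatsoever. After this simultaneous advance, every classical crossing of $D$ has had its overstrand danced first. Hence each dancer may then freely continue along the remaining (non-over-bridge) portion of its path: every classical crossing it meets as an understrand has already been cleared, and every virtual crossing is unrestricted. This completes an unrestricted $n$-dance configuration for $D$, so $da^{(u)}(K)\le da^{(u)}(D)\le n = b_1(K)$.

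There is essentially no obstacle here; the only thing to be careful about is the bookkeeping that an over-bridge may contain virtual crossings, and that two adjacent over-bridges are separated by a classical undercrossing, not a virtual one, so that the decomposition of $D$ into over-bridges plus connecting understrand arcs is exactly parallel to the classical case. Since the unrestricted rule relaxes (rather than adds) constraints relative to the classical over-first rule, and since the virtual Reidemeister structure plays no role in the construction, the argument of Proposition \ref{prop:upperbound} goes through unchanged.
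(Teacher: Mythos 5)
Your proposal is correct and follows essentially the same argument as the paper: place one dancer at the start of each over-bridge, advance all dancers across their over-bridges simultaneously (virtual crossings being unrestricted and all classical crossings along a bridge being overcrossings), and then let them finish the remaining arcs freely, yielding $da^{(u)}(K)\le da^{(u)}(D)\le b_1(D)=b_1(K)$.
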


\begin{proof}
    Let $K$ be a virtual link with virtual braid index $n$, and let $D$ be a diagram for $K$ that realizes this braid index, that is, $D$ is the closure of an $n$-stranded virtual braid.
    Choose the orientation of $D$ induced by the upwards orientation of the braid.
 Place $n$ dancers, one on each strand, at the bottom of the braid. All dancers travel “up” the braid.
After the dancers traverse the braid, they travel the arcs in the closure, which connect back down to the initial starting points. Choose the speeds for each dancer so that all dancers travel up the braid uniformly with respect to height, not distance traveled. Just before each classical crossing, increase the speed of the dancer on the over-strand so that the dancer passes the crossing first before the dancer on the lower strand. After the crossing, decrease the over-strand dancer's speed to join in the uniform height progression of the other dancers. Notice that at each virtual crossing, the two dancers arrive at the crossing at the same time (the same height), satisfying the coincident rule. The resulting configuration is a coincident $n$-dance configuration for $D$, proving that $da^{(c)}(K)\leq da^{(c)}(D)\leq \beta(D)=\beta(K)$.
\end{proof}

Putting it all together, we see that for all virtual links $K$, the following inequalities are true (with currently unknown comparisons in braces). 

$$
da^{(u)}(K)=b_1(K)\leq \begin{Bmatrix}
b_2(K)\\
da^{(c)}(K)
\end{Bmatrix}\leq \beta(K)\text{\hspace{.5cm} and \hspace{.5cm}}
\begin{Bmatrix}
da^{(u)}(K)=b_1(K)\\
sw^{(v)}(K)\\
\log_{|Q|}Col_Q(K)
\end{Bmatrix}\leq da^{(c)}(K)
$$

\subsection{Python code}
In \cite{BKVV}, the authors provided Python code which calculates the Wirtinger number of a diagram. In this paper, we adapt their code to compute the danceability of a diagram. We have proven that danceability is equal to the bridge index. Thus, for readers interested in computing Wirtinger number, our code is not useful, because it will likely produce a higher estimate than needed. However, we successfully used it to find a diagram whose Wirtinger number is strictly less than its danceability number in \autoref{prop:differencebetweendanceandwirt}. Hence, the code can be used to demonstrate the difference between coloring diagrams with or without specified orientations.

One task for which our code is certainly useful is estimating the coincident danceability index. For this, we must determine how to encode a virtual knot diagram in a Gauss code \textit{with} virtual crossing data. Observe that the Gauss codes most often used to encode virtual knots do not keep track of virtual crossings. In our Python computations, we use non-integers to represent virtual crossings in Gauss codes. In the code, we impose the rule that at each virtual crossing, the coloring is not extended until both incoming strands are colored. The table of coincident numbers of virtual knots from Green’s table and our code can be found on GitHub at \cite{table}. 
In the table, the first column contains the names of knots used in Green’s table. The third column lists all strands. The fourth column indicates which strands can function as initial dancers. This process is demonstrated in \autoref{fig:499}.

\begin{figure}[h]
\labellist
\small\hair 2pt
\pinlabel $6.25$ at -20 215
\pinlabel $2$ [l] at 360 130
\pinlabel $1$ [t] at 218 28
\pinlabel $3$ [r] at 140 442
\pinlabel $4$ [bl] at 244 320
\pinlabel $3.25$ [bl] at 390 275
\endlabellist
    \centering    \includegraphics[scale=.25]{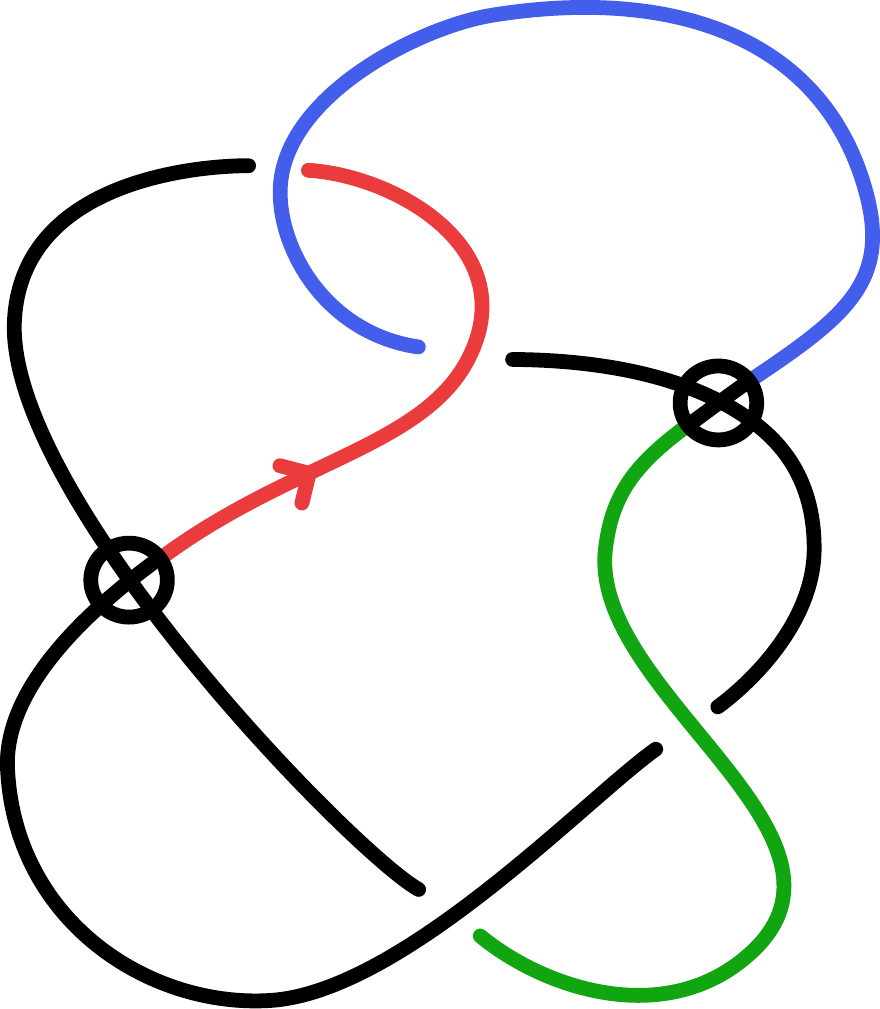}
    \caption{In the output table, the knot 4.99 has strands A: -1 2 -3.25 ; B: -3.25 3 -4 ; C: -4 3.25 ; D: 3.25 -2 ; E: -2 1 -6.25 ; F: -6.25 4 -3 ; G: -3 6.25 ; H: 6.25 -1. Three dancers start on the strands A, B, and F (colored)  and can dance the whole diagram. No pair of dancers can dance the entire diagram.}
\label{fig:499}
\end{figure}



\bibliography{DanceIsBridge.bib}
\bibliographystyle{plain}

\end{document}